\numberwithin{equation}{section}
\newtheorem{remark}{Remark}
\newtheorem{proposition}{Proposition}
\newtheorem{theorem}{Theorem}
\newtheorem{lemma}{Lemma}
\def\RR{\mathbb R}
\def\be{\begin{equation}}
\def\ee{\end{equation}}
\def\bea{\begin{eqnarray}}
\def\eea{\end{eqnarray}}
\begin{document}

\title{Structure preserving schemes for mean-field equations of collective behavior}
\author{Lorenzo Pareschi\thanks{Department of Mathematics and Computer Science, University of Ferrara, Via Machiavelli 35, 44121 Ferrara, Italy ({\tt lorenzo.pareschi@unife.it}).} \and
Mattia Zanella\thanks{Department of Mathematical Sciences, Politecnico di Torino, Corso degli Abruzzi 24, 10129, Torino, Italy ({\tt mattia.zanella@polito.it}).}
}

\maketitle

\abstract{In this paper we consider the development of numerical schemes for mean-field equations describing the collective behavior of a large group of interacting agents. The schemes are based on a generalization of the classical Chang-Cooper approach and are capable to preserve the main structural properties of the systems, namely nonnegativity of the solution, physical conservation laws, entropy dissipation and stationary solutions. In particular, the methods here derived are second order accurate in transient regimes whereas they can reach arbitrary accuracy asymptotically for large times. Several examples are reported to show the generality of the approach.}


\section{Introduction}
The description of social dynamics characterized by emerging collective behaviors has gained increasing popularity in the recent years \cite{APTZ, BAMT, CFTV, CS, DOCBC, PT0}. Typical examples are groups of animals/humans with a tendency to flock or herd but also interacting {agents in a financial market}, potential {voters during political elections} and connected {members of a social network}.  

In the mathematical description classical particles are replaced by more complex structures ({agents}, {active particles},...) which take into account additional aspects related to the various specific fields of application, like behavioral characteristics, visual perception, experience/knowledge and so on. Various {microscopic models} have been introduced in different communities with the aim to reproduce qualitatively the dynamics and to capture some essential {stylized facts} (clusters, power laws, consensus, flocking, ...). 

In spite of many differences between classical particle dynamics and systems of interacting agents (equation are not a consequence of fundamental physical laws derived from first principles) one can apply {similar methodological approaches}. In particular, to analyze the formation of stylized facts and reduce the computational complexity of the agents' dynamics, it is of utmost importance to derive the corresponding {mesoscopic/kinetic} description \cite{APTZ,APZ3,BD,CFRT,CFTV,FPTT,PT0,PT1}.

These kinetic equations are derived in the limit of a large number of interacting agents and describe the evolution of a non negative distribution function $f(x,w,t)$, $t\ge 0$, $x\in  \RR^{d_x}$, $w\in  \RR^{d_w}$, $d_x,d_w\ge 1$, which satisfies a mean-field type equation of the general form
\begin{equation}
\label{eq:NAD}
\partial_t f + {\mathcal L}[f]= \nabla_w \cdot \Big[ \mathcal B[f]f + \nabla_w (Df) \Big],
\end{equation}
where ${\mathcal L}[\cdot](x,w,t)$ is an operator describing the agents' dynamics with respect to the $x$-variable, $\mathcal B[\cdot](x,w,t)$ is an alignment operator in the $w$-variable and $D=D(x,w)\ge0$ is a diffusion function. 

The most celebrated example is given by the mean-field \emph{Cucker-Smale} model \cite{CFRT, CFTV, CS, PT0} which, in absence of diffusion, corresponds to the choices 
\be
{\mathcal L}[f] = w\cdot \nabla_x f,\qquad {\mathcal B}[f]=\int_{\RR^{d_v}\times \RR^{d_x}} H(x,y)(w-v)f(y,v,t)\,dy\,dv,
\label{eq:CS}
\ee
where 
\be
H(x,y)=\frac1{(1+(x-y)^2)^\gamma},\qquad \gamma \geq 0.
\label{eq:csH}
\ee 
The model describes the alignment process in a multidimensional group of agents (birds, insects, \ldots), when all agents are aligned with equal speed a flocking state is reached. For the above choice of $H$ it has been proved that if $\gamma \leq 1/2$, independently on their initial state, all agents tend to move exponentially fast with the same velocity, while their
relative distances tend to remain constant. The addition of a diffusion term weighted by $D\in\mathbb\RR^+$ has been studied in \cite{BaDe,BoCar} among others. 

Another example is the non homogeneous mean-field \emph{Cordier-Pareschi-Toscani} model \cite{CPT, PT1} which describes the evolution of the distribution $f(x,w,t)$ of wealth $w\in \RR^+$ in a set of agents with a given propensity to invest $x\in [0,1]$. In our notations it corresponds to
\be
{\mathcal L}[f] = \phi(x,w) \partial_x f,\qquad {\mathcal B}[f]=\int_{\RR^{+}}(w-v)f(y,v,t)\,dv,\qquad D=\frac{\sigma^2}{2} w^2.
\label{eq:CPT}
\ee
The equilibrium solutions in the homogeneous case, $f=f(w,t)$ independent of $x$, present the formation of {power-laws} and read
\be
f_{\infty}(w)=\frac{(\mu-1)^{\mu}}{\Gamma(\mu)w^{1+\mu}}\exp\left(-\frac{\mu-1}{w}\right),
\ee
with $\mu=1+2/\sigma^2 > 1$ the \emph{Pareto exponent} and $\int_{\RR^+} f_{\infty}(w) w\, dw =1$.

Finally, a third example is represented by the mean-field \emph{Albi-Pareschi-Zanella} model \cite{APTZ, APZ3} describing the opinion dynamics of a group of interacting agents over a social network. The evolution of the distribution $f(x,w,t)$ of agents with a given opinion $w \in [-1,1]$ and a certain amount of discrete connections $x \in \left\{0,1,\ldots,c_{max}\right\}$, is characterized by
\be
\begin{split}
{\mathcal L}[f] =& -\dfrac{2 V_r(f;w)}{\gamma+\beta}\left[(x+1+\beta)f(x+1,w,t)-(x+\beta)f(x,w,t)\right]\\
&-\dfrac{2 V_a (f;w)}{\gamma+\alpha}\left[(x-1+\alpha)f(x-1,w,t)-(x+\alpha)f(x,w,t)\right],\\
{\mathcal B}[f]=&\sum_{y=0}^{c_{max}}\int_{[-1,1]} P(w,v;x,y)(w-v)f(v,y,t)\,dv,
\end{split}
\label{eq:APZ}
\end{equation} 
where $P(\cdot,\cdot;\cdot,\cdot)\in [0,1]$ is a compromise function, 
$\gamma=\gamma(t)$ is the mean density of connectivity 
$\gamma(t) = \sum_{x=0}^{c_{max}}x\int_{[-1,1]}f(x,w,t)\,dw$,
$\alpha, \beta>0$ are attraction coefficients, and $V_r(f;w)\geq 0$, $V_a(f;w)\geq 0$ are characteristic rates of the connections removal and adding processes, respectively.

Different equilibrium solutions in the case $f=f(w,t)$ independent of $x$, are possible depending on the choices of $P$ and $D$. For example, if $P\equiv 1$ and $D=\sigma^2 (1-w^2)^2 / 2$ the steady state reads
\be
f_{\infty}(w)= C_0(1+w)^{-2+\bar m /\sigma^2}(1-w)^{-2-\bar m/\sigma^2}\exp\Big\{-\dfrac{(1-\bar m w)}{\sigma^2(1-w^2)}\Big\},
\label{eq:ops}
\ee
where $\bar m = \int_{[-1,1]} w f_{\infty}(w)\,dw$ and $C_0$ is such that $\int_{[-1,1]} f_{\infty}(w)\,dw=1$.

The development of numerical methods for the above class of equations is challenging due to the intrinsic structural properties of the solution \cite{BCS,BD,CCH,CC,gosse,LLPS,PZ}. Non negativity of the distribution function, conservation of invariant quantities (like moments in $w$ of the distribution function), entropy dissipation and homogeneous steady states are essential in order to compute qualitatively correct solutions of the mean-field equation. 

In this paper we focus on the construction of numerical methods which preserves such structural properties and in particular, which are able to capture the correct steady state of the mean-field problem with arbitrary order of accuracy. The schemes are based on a suitable generalization of the Chang-Cooper approach to nonlinear problems of Fokker-Planck type and are derived in the next Section. Their properties are then discussed in Section 3. Finally numerical results are presented in Section 4.

\section{Derivation of the schemes}\label{sec:3}
Since most of the structural properties are related to the right hand side in (\ref{eq:NAD}) in the following we will focus on the homogeneous case $f=f(w,t)$. Connections with the full problem are then recovered using splitting methods or other partitioned time discretization schemes, like additive Runge-Kutta methods \cite{HNW}.


Under this assumption, we can rewrite the mean-field equation \eqref{eq:NAD} as 
\be\label{eq:NAD2}
\partial_t f(w,t) = \nabla_w \cdot [(\mathcal B[f](w,t)+\nabla_w D(w))f(w,t)+D(w)\nabla_w f(w,t)].
\ee
We define the $d-$dimensional flux function
\be\label{eq:flux_general}
\mathcal F[f](w,t) = (\mathcal B[f](w,t)+\nabla_w D(w))f(w,t)+D(w)\nabla_w f(w,t),
\ee
so that the equation may be written in conservative form as
\be\label{eq:NAD_dim}
\partial_t f(w,t) = \nabla_w \cdot \mathcal F(w,t). 
\ee
\subsection{One-dimensional case}
Let us consider for notation simplicity the one-dimensional case
\be\label{eq:FP_flux}
\partial_t f(w,t) = \partial_w \mathcal F[f](w,t),
\ee
where
\be
\mathcal F[f](w,t) = ( \mathcal B[f](w,t)+D'(w)) f(w,t)+D(w)\partial_w f(w,t)
\ee
and we used the notation $D'(w)=\partial_w D(w)$ and assume $D(w)$ strictly positive in the internal points of the computational domain. We introduce an uniform spatial grid $w_i$, $i=0,\dots,N$ such that $w_{i+1}-w_i=\Delta w$. We denote as usual $w_{i\pm 1/2}=w_i\pm \Delta/2$ and consider the conservative discretization of equation \eqref{eq:FP_flux}
\be\label{eq:flux}
\dfrac{d}{dt} f_i(t) = \dfrac{\mathcal F_{i+1/2}[f](t)-\mathcal F_{i-1/2}[f](t)}{\Delta w},
\ee
where for each $t\ge 0$, $f_i(t)$ is an approximation of $f(w_i,t)$ and  $\mathcal F_{i\pm 1/2}[f](t)$ is the flux function characterizing the discretization. 

Let us set  $\mathcal C[f](w,t)=\mathcal B[f](w,t)+D'(w)$ and adopt the notations $\mathcal B_{i+1/2}=\mathcal B[f](w_{i+1/2},t)$, $D_{i+1/2}=D(w_{i+1/2})$, $D'_{i+1/2}=D'(w_{i+1/2})$. We will consider a general flux function which is combination of the grid points $i+1$ and $i$ as in \cite{CC, PZ}
\be\begin{split}\label{eq:CC_flux}
\mathcal F_{i+ 1/2}[f] = \tilde{\mathcal C}_{i+1/2}\tilde f_{i+1/2}+D_{i+1/2}\dfrac{f_{i+1}-f_i}{\Delta w},
\end{split}\ee
where 
\be\label{eq:f_CC}
\tilde f_{i+1/2}=(1-\delta_{i+1/2})f_{i+1}+\delta_{i+1/2}f_i.
\ee
Here, we aim at deriving suitable expressions for $\delta_{i+1/2}$ and $\tilde{\mathcal C}_{i+1/2}$ in such a way that the method yields nonnegative solutions, without restrictions on $\Delta w$, and preserves the steady state of the system with arbitrary accuracy. 

 For example, the standard approach based on central difference is obtained taking $\delta_{i+1/2}=1/2$ and  $\tilde{\mathcal{C}}_{i+1/2}={\mathcal{B}}_{i+1/2}$, $\forall\,i$. It is well-known, however, that such a discretization method is subject to restrictive conditions over the mesh size $\Delta w$ in order to keep non negativity of the solution.  

Here, we aim at deriving suitable expressions for $\delta_{i+1/2}$ and $\tilde{\mathcal{C}}_{i+1/2}$ in such a way that the method yields nonnegative solutions without restriction on $\Delta w$ and preserves the steady state of the system with arbitrary order of accuracy.

First, observe that at the steady state the numerical flux equal should vanish. From \eqref{eq:CC_flux} we get 
\be
\dfrac{f_{i+1}}{f_i} = \dfrac{-\delta_{i+1/2}\tilde{\mathcal C}_{i+1/2}+\dfrac{D_{i+1/2}}{\Delta w}}{(1-\delta_{i+1/2})\tilde{\mathcal C}_{i+1/2}+\dfrac{D_{i+1/2}}{\Delta w}}.
\ee
Similarly, if we consider the analytical flux at the steady state, we have
\be\label{eq:FP_steady}
D(w)\partial_w f(w,t) = -(\mathcal B[f]+D'(w))f(w,t),
\ee
which is in general not solvable, except in some special cases due to the nonlinearity on the right hand side. We may overcome this difficulty in the quasi steady-state approximation integrating equation \eqref{eq:FP_steady} on the cell $[w_i,w_{i+1}]$
\be
\int_{w_i}^{w_{i+1}}\dfrac{1}{f(w,t)}\partial_w f(w,t)dw = -\int_{w_i}^{w_{i+1}}\dfrac{1}{D(w)}(\mathcal B[f](w,t)+D'(w))dw,
\ee
which gives
\be\label{eq:quasi_SS}
\dfrac{f_{i+1}}{f_i} = \exp \Big\{ -\int_{w_i}^{w_{i+1}}\dfrac{1}{D(w)}(\mathcal B[f](w,t)+D'(w))dw  \Big\},
\ee
for all $i=1,\dots,N-1$. 

Now, by equating the ratio $f_{i+1}/f_i$ of the numerical and the exact flux and setting
\be\label{eq:B_tilde}
\tilde{\mathcal C}_{i+1/2}=\dfrac{D_{i+1/2}}{\Delta w}\int_{w_i}^{w_{i+1}}\dfrac{\mathcal B[f](w,t)+D'(w)}{D(w)}dw
\ee
we recover
\be\label{eq:delta}
\delta_{i+1/2} = \dfrac{1}{\lambda_{i+1/2}}+\dfrac{1}{1-\exp(\lambda_{i+1/2})}, 
\ee
where
\be\label{eq:lambda_high}
\lambda_{i+1/2}=\int_{w_i}^{w_{i+1}}\dfrac{\mathcal B[f](w,t)+D'(w)}{D(w)}dw.
\ee
\begin{remark}
A second order method is obtained by discretizing \eqref{eq:lambda_high} through the midpoint rule
\be
\int_{w_i}^{w_{i+1}}\dfrac{\mathcal B[f](w,t)+D'(w)}{D(w)}dw \approx \dfrac{\Delta w(\mathcal B_{i+1/2}+D'_{i+1/2})}{D_{i+1/2}},
\ee
therefore
\be\label{eq:lambda}
\lambda_{i+1/2}^{\textrm{mid}} = \dfrac{\Delta w(\mathcal B_{i+1/2}+ D'_{i+1/2})}{D_{i+1/2}}
\ee
and 
\be
\delta_{i+1/2}^{\textrm{mid}} = \dfrac{D_{i+1/2}}{\Delta w(\mathcal B_{i+1/2}+ D'_{i+1/2})}++\dfrac{1}{1-\exp(\lambda_{i+1/2}^{\textrm{mid}})}.
\ee
Higher order accuracy of the steady state solution may be obtained by higher order approximations of the integral \eqref{eq:B_tilde}.
\end{remark}


\subsection{The multi-dimensional case}\label{sec:2D}
In order to extend the previous approach to multi-dimensional situations we consider here the case of two dimensional problems. We introduce a mesh consisting of the cells $C_{ij}=[w_{i-1/2},w_{i+1/2}]\times [v_{j-1/2},v_{j+1/2}]$ assumed to be of uniform size $\Delta w\Delta v$, where as usual $\Delta w:=w_{i+1/2}-w_{i-1/2}$ and $\Delta v:=v_{j+1/2}-v_{j-1/2}$ for all $i=0,\dots,N_1$ and $j=0,\dots,N_2$. 
 Integration of the general mean-field equation in dimension $d\ge1$ introduced in \eqref{eq:NAD_dim} yields 
\be\label{eq:eqflux2D}
\dfrac{d}{dt} f_{i,j} = \dfrac{\mathcal F_{i+1/2,j}[f]-\mathcal F_{i-1/2,j}[f]}{\Delta w}+\dfrac{\mathcal F_{i,j+1/2}[f]-\mathcal F_{i,j-1/2}[f]}{\Delta v},
\ee
being $\mathcal F_{i\pm1/2,j}[f]$, $\mathcal F_{i,j\pm1/2}[f]$ flux functions characterizing the numerical discretization. The quasi-stationary approximations over the cell $[w_i,w_{i+1}]\times[v_i,v_{i+1}]$ of the two dimensional problem read
\be\begin{split}
\int_{w_{i}}^{w_{i+1}} \dfrac{1}{f(w,v_j,t)}\partial_w f(w,v_j,t) dw&= -\int_{w_{i}}^{w_{i+1}}\dfrac{\mathcal B[f](w,v_j,t)+\partial_w D(w,v_j)}{D(w,v_j)}dw, \\
\int_{v_{j}}^{v_{j+1}} \dfrac{1}{f(w_i,v,t)}\partial_v f(w_i,v,t) dv&= -\int_{v_j}^{v_{j+1}}\dfrac{\mathcal B[f](w_i,v,t)+\partial_v D(w_i,v)}{D(w_i,v)}dv.
\end{split}\ee
Therefore setting
\be\begin{split}
\tilde{\mathcal C}_{i+1/2,j} &= \dfrac{D_{i+1/2,j}}{\Delta w}\int_{w_i}^{w_{i+1}}\dfrac{\mathcal B[f](w,v_j,t)+\partial_{w}D(w,v_j)}{D(w,v_j)}dw\\
\tilde{\mathcal C}_{i,j+1/2} &= \dfrac{D_{i,j+1/2}}{\Delta v}\int_{v_j}^{v_{j+1}}\dfrac{\mathcal B[f](w_i,v,t)+\partial_{v}D(w_i,v)}{D(w_i,v)}dv
\end{split}\ee
and by considering an analogous flux components by components as in the one-dimensional case
\be\begin{split}\label{eq:F2D}
\mathcal F_{i+1/2,j}[f] &= \tilde{\mathcal C}_{i+1/2,j}\tilde{f}_{i+1/2,j}+D_{i+1/2,j}\dfrac{f_{i+1,j}-f_{i,j}}{\Delta w}\\
\tilde f_{i+1/2,j}& = (1-\delta_{i+1/2,j})f_{i+1,j}+\delta_{i+1/2,j}f_{i,j}\\
\mathcal F_{i,j+1/2}[f] &= \tilde{\mathcal C}_{i,j+1/2}\tilde{f}_{i,j+1/2}+D_{i,j+1/2}\dfrac{f_{i,j+1}-f_{i,j}}{\Delta v}\\
\tilde f_{i,j+1/2}& = (1-\delta_{i,j+1/2})f_{i,j+1}+\delta_{i,j+1/2}f_{i,j},
\end{split}\ee
we define $\delta_{i+1/2,j}$ and $\delta_{i,j+1/2}$ in such a way that we preserve the steady state solution for each dimension, i.e.
\be\begin{split}\label{eq:delta2D_1}
\delta_{i+1/2,j}     &= \dfrac{1}{\lambda_{i+1/2,j}}+\dfrac{1}{1-\exp(\lambda_{i+1/2,j})},\\
\delta_{i,j+1/2}     &= \dfrac{1}{\lambda_{i,j+1/2}}+\dfrac{1}{1-\exp(\lambda_{i,j+1/2})}\\
\lambda_{i+1/2,j} &= \dfrac{\Delta w \tilde{\mathcal C}_{i+1/2,j}}{D_{i+1/2,j}},\quad \quad \lambda_{i,j+1/2} = \dfrac{\Delta v \tilde{\mathcal C}_{i,j+1/2}}{D_{i,j+1/2}}.
\end{split}\ee
The cases of higher dimension $d\ge 3$ may be derived in a similar way. 

\section{Main properties}
In order to study the structural properties of the numerical scheme, like non negativity and entropy property,  we restrict to the one-dimensional case. 

\subsection{Nonnegativity}
We introduce a time discretization $t^n=n\Delta t$ with $\Delta t>0$ and $n = 0,\dots,T$ and consider the simple forward Euler method
\be\label{eq:NAD_dimd}
f^{n+1}_i=f^n_i + \Delta t \dfrac{\mathcal F_{i+1/2}^n-\mathcal F_{i-1/2}^n}{\Delta w},
\ee
with no flux boundary conditions $F_{N+1/2}^n=\mathcal F_{-1/2}^n=0$.
 
\begin{lemma}
Let us consider the scheme (\ref{eq:NAD_dimd}) with no flux boundary conditions. We have for all $n\in\mathbb N$
\be
\sum_{i=0}^N f^{n+1}_i = \sum_{i=0}^N f^n_i.
\ee
\end{lemma}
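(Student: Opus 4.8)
The plan is to prove the conservation identity by directly summing the forward Euler update over all grid points and exploiting the discrete divergence structure of the scheme. Summing \eqref{eq:NAD_dimd} over $i=0,\dots,N$ gives
\be
\sum_{i=0}^N f^{n+1}_i = \sum_{i=0}^N f^n_i + \frac{\Delta t}{\Delta w}\sum_{i=0}^N \left(\mathcal F_{i+1/2}^n-\mathcal F_{i-1/2}^n\right),
\ee
so everything reduces to showing that the flux contribution vanishes.

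The key step is to observe that the inner sum telescopes: writing out consecutive terms, each interior flux $\mathcal F_{i+1/2}^n$ appears once with a plus sign (from the $i$-th term) and once with a minus sign (from the $(i+1)$-th term), so all interior contributions cancel in pairs and only the two extreme fluxes survive,
\be
\sum_{i=0}^N \left(\mathcal F_{i+1/2}^n-\mathcal F_{i-1/2}^n\right)=\mathcal F_{N+1/2}^n-\mathcal F_{-1/2}^n.
\ee
I would then invoke the no-flux boundary conditions $\mathcal F_{N+1/2}^n=\mathcal F_{-1/2}^n=0$ stated with the scheme, which forces the right-hand side to be zero. Combining the two displays yields $\sum_{i=0}^N f^{n+1}_i=\sum_{i=0}^N f^n_i$, and the result for general $n\in\mathbb N$ follows by induction on $n$ since the argument applies identically at each time step.

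There is no genuine obstacle here: the statement is an exact discrete conservation law that follows purely from the conservative (flux-difference) form of the discretization together with the vanishing boundary fluxes, and it is independent of the particular expressions chosen for $\tilde{\mathcal C}_{i+1/2}$ and $\delta_{i+1/2}$. The only point requiring minor care is bookkeeping at the endpoints, to confirm that the telescoping leaves precisely the boundary fluxes $\mathcal F_{N+1/2}^n$ and $\mathcal F_{-1/2}^n$ and nothing else; once this is checked the conclusion is immediate.
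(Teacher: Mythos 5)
Your proof is correct and follows exactly the same route as the paper's: sum the update over $i$, telescope the flux differences to the two boundary fluxes, and invoke the no-flux conditions $\mathcal F_{N+1/2}^n=\mathcal F_{-1/2}^n=0$. Nothing further is needed.
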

\begin{proof}
From equation (\ref{eq:NAD_dimd}) we have
\be
\sum_{i=0}^N f^{n+1}_i = \sum_{i=0}^N f^n_i +\dfrac{\Delta t}{\Delta w} \sum_{i=0}^N (\mathcal F_{i+1/2}^n-\mathcal F_{i-1/2}^n).
\ee
Now since
\be
\sum_{i=0}^N (\mathcal F_{i+1/2}^n-\mathcal F_{i-1/2}^n) = \mathcal F_{N+1/2}^n-\mathcal F_{-1/2}^n,
\ee
by imposing no flux boundary conditions we conclude. 

\end{proof}
Note that mass conservation holds true also in the backward Euler case by imposing $\mathcal F_{N+1/2}^{n+1}=\mathcal F_{-1/2}^{n+1}=0$.

Concerning non negativity we can prove \cite{PZ}
\begin{proposition}
Under the time step restriction 
\be
\Delta t\le \dfrac{\Delta w^2}{2(M\Delta w+D)},\quad
M = \max_{0\le i\le N} |\tilde{\mathcal C}_{i+1/2}^n|, \quad D = \max_{0\le i\le N}D_{i+1/2},
\label{eq:nu}
\ee
the explicit scheme (\ref{eq:NAD_dimd}) preserves nonnegativity, i.e 
$ f^{n+1}_i\ge 0$ if $f^n_i\ge 0$, $i=0,\dots,N$. 
\end{proposition}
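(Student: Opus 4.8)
The plan is to recast the explicit update \eqref{eq:NAD_dimd} as a three-point stencil with nonnegative coefficients acting on $f_{i-1}^n,f_i^n,f_{i+1}^n$, so that nonnegativity is propagated from $t^n$ to $t^{n+1}$; this is the standard positivity (monotonicity) argument, and the whole point is that the Chang--Cooper weighting makes the scheme genuinely monotone. First I would rewrite the flux \eqref{eq:CC_flux}. Comparing \eqref{eq:B_tilde} with \eqref{eq:lambda_high} gives $\tilde{\mathcal C}_{i+1/2}=D_{i+1/2}\lambda_{i+1/2}/\Delta w$; inserting this together with \eqref{eq:f_CC} and \eqref{eq:delta} into \eqref{eq:CC_flux} and collecting terms yields the two-point form $\mathcal F_{i+1/2}^n=a_{i+1/2}f_{i+1}^n-b_{i+1/2}f_i^n$ with
\be
a_{i+1/2}=\frac{D_{i+1/2}}{\Delta w}\,\frac{\lambda_{i+1/2}\,e^{\lambda_{i+1/2}}}{e^{\lambda_{i+1/2}}-1},\qquad b_{i+1/2}=\frac{D_{i+1/2}}{\Delta w}\,\frac{\lambda_{i+1/2}}{e^{\lambda_{i+1/2}}-1},
\ee
where I have used the identities $\lambda(1-\delta_{i+1/2})+1=\lambda e^{\lambda}/(e^{\lambda}-1)$ and $1-\lambda\delta_{i+1/2}=\lambda/(e^{\lambda}-1)$ with $\lambda=\lambda_{i+1/2}$.

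The key step is then to observe that $a_{i+1/2}\ge0$ and $b_{i+1/2}\ge0$ for every value of $\lambda_{i+1/2}$: the Bernoulli-type functions $\lambda/(e^{\lambda}-1)$ and $\lambda e^{\lambda}/(e^{\lambda}-1)$ are strictly positive on $\RR\setminus\{0\}$ and extend continuously by $1$ at $\lambda=0$ (equivalently $\delta_{i+1/2}\in(0,1)$ for all $i$, since $D_{i+1/2}>0$ in the interior). Substituting the two-point form into \eqref{eq:NAD_dimd} and grouping the unknowns I would obtain
\be
f_i^{n+1}=\frac{\Delta t}{\Delta w}b_{i-1/2}f_{i-1}^n+\Big(1-\frac{\Delta t}{\Delta w}\big(a_{i-1/2}+b_{i+1/2}\big)\Big)f_i^n+\frac{\Delta t}{\Delta w}a_{i+1/2}f_{i+1}^n,
\ee
whose outer coefficients are nonnegative by the previous remark; it remains only to make the central coefficient nonnegative.

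For the diagonal term I would estimate $a_{i-1/2}+b_{i+1/2}$ using the equivalent expressions $a_{i-1/2}=\tilde{\mathcal C}_{i-1/2}(1-\delta_{i-1/2})+D_{i-1/2}/\Delta w$ and $b_{i+1/2}=D_{i+1/2}/\Delta w-\tilde{\mathcal C}_{i+1/2}\delta_{i+1/2}$. Since $0\le\delta_{i\pm1/2}\le1$, $|\tilde{\mathcal C}_{i\pm1/2}^n|\le M$ and $D_{i\pm1/2}\le D$, this gives $a_{i-1/2}+b_{i+1/2}\le2(M+D/\Delta w)=2(M\Delta w+D)/\Delta w$, so the central coefficient is bounded below by $1-2\Delta t(M\Delta w+D)/\Delta w^{2}$, which is nonnegative precisely under the restriction \eqref{eq:nu}. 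Hence $f_i^{n+1}$ is a nonnegative linear combination of the nonnegative $f_{i-1}^n,f_i^n,f_{i+1}^n$, and the boundary indices $i=0,N$ cause no trouble since the no-flux conditions $\mathcal F_{-1/2}^n=\mathcal F_{N+1/2}^n=0$ simply delete one outer term. I expect the main obstacle to be the structural nonnegativity of $a_{i+1/2}$ and $b_{i+1/2}$: it does not follow from $0\le\delta\le1$ alone but crucially exploits the exponential form \eqref{eq:delta} of the weights; once it is in hand, the time-step condition is a routine lower bound on the diagonal coefficient.
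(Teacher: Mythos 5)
Your proposal is correct and follows essentially the same route as the paper: write the update as a three-point stencil, verify that the off-diagonal coefficients are nonnegative by reducing them to the Bernoulli-type quantities $\lambda/(e^{\lambda}-1)$ and $\lambda e^{\lambda}/(e^{\lambda}-1)$ coming from the exponential form of $\delta_{i+1/2}$, and then impose the CFL-type bound \eqref{eq:nu} on the diagonal coefficient using $0\le\delta_{i\pm1/2}\le1$ together with the bounds $M$ and $D$. Your explicit $a_{i+1/2}$, $b_{i+1/2}$ notation is a slightly cleaner packaging of the same computation.
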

\begin{proof}
The scheme reads
\be\begin{split}\label{eq:prop1_f}
& f_i^{n+1} = f_i^n + \dfrac{\Delta t}{\Delta w}\Bigg[ \left( (1-\delta_{i+1/2}^n)\tilde{\mathcal C}_{i+1/2}^{n}+\dfrac{D_{i+1/2}}{\Delta w} \right)f_{i+1}^n\\
& +\left(\tilde{\mathcal C}_{i+1/2}^n\delta_{i+1/2}^n-\tilde{\mathcal C}_{i-1/2}^n(1-\delta_{i-1/2}^n)-\dfrac{1}{\Delta w}(D_{i+1/2}+D_{i-1/2})\right)f_i^n \\
&- \left(\tilde{\mathcal C}^n_{i-1/2}\delta_{i-1/2}^n-\dfrac{D_{i-1/2}}{\Delta w}\right)f_{i-1}^n \Bigg].
\end{split}\ee
From \eqref{eq:prop1_f} the coefficients of $f_{i+1}^n$ and $f_{i-1}^n$ should satisfy
\be\begin{split}
(1-\delta_{i+1/2})\tilde{\mathcal C}_{i+1/2}^n+\dfrac{D_{i+1/2}}{\Delta w}\ge 0, \qquad
-\delta_{i-1/2}\tilde{\mathcal C}_{i-1/2}^n+\dfrac{D_{i-1/2}}{\Delta w}\ge 0,
\end{split}\ee
that is equivalent to show that
\be\begin{split}
\lambda_{i+1/2}\left(1-\dfrac{1}{1-\exp{\lambda_{i+1/2}}}\right)\ge 0, \qquad \dfrac{\lambda_{i-1/2}}{\exp{\lambda_{i-1/2}}-1}\ge 0,
\end{split}\ee
which holds true thanks to the properties of the exponential function. In order to ensure the non negativity of the scheme the time step should satisfy the restriction $\Delta t\le {\Delta w}/{\nu}$, with 
\be
\nu = \max_{0\le i\le N} \Big\{ \tilde{\mathcal C}_{i+1/2}^n\delta^n_{i+1/2}-\tilde{\mathcal C}_{i-1/2}^n(1-\delta_{i-1/2}^n)-\dfrac{D_{i+1/2}+D_{i-1/2}}{\Delta w} \Big\}.
\ee
Being $M$ defined in \eqref{eq:nu}, and $0\le \delta_{i\pm 1/2}\le 1$, we obtain the prescribed bound. 
\end{proof}
\begin{remark}
Higher order SSP methods \cite{GST} are obtained by considering a convex combination of forward Euler methods. Therefore, the non negativity result can be extended to general SSP methods. 
\end{remark}
In practical applications, it is desirable to avoid the parabolic restriction $\Delta t = O((\Delta w)^2)$ of explicit schemes. Unfortunately fully implicit methods originate a nonlinear system of equations. However, we can prove that nonnegativity of the solution holds true also for the semi-implicit case
\be
f^{n+1}_i=f^n_i + \Delta t \dfrac{\hat{\mathcal F}_{i+1/2}^{n+1}-\hat{\mathcal F}_{i-1/2}^{n+1}}{\Delta w},
\label{eq:semi}
\ee
where
\be
\hat{\mathcal F}_{i+1/2}^{n+1}= \tilde{\mathcal C}_{i+1/2}^n \left[ (1-\delta_{i+1/2}^n)f_{i+1}^{n+1}+\delta_{i+1/2}f_i^{n+1} \right]+D_{i+1/2}\dfrac{f_{i+1}^{n+1}-f_i^{n+1}}{\Delta w}.
\ee
We have \cite{PZ}
\begin{proposition}
Under the time step restriction 
\be\label{eq:time_step_implicit}
\Delta t< \dfrac{\Delta w}{2M},\qquad M = \max_{0\le i\le N}|\tilde{\mathcal B}^n_{i+1/2}|
\ee
the semi-implicit scheme (\ref{eq:semi}) preserves nonnegativity, i.e 
$ f^{n+1}_i\ge 0$ if $f^n_i\ge 0$, $i=0,\dots,N$. 
\end{proposition}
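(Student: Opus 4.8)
The plan is to recast the semi-implicit update \eqref{eq:semi} as a linear system for the unknown vector $f^{n+1}=(f_0^{n+1},\dots,f_N^{n+1})^\top$ and to show that the associated matrix is monotone, so that nonnegativity is propagated from $f^n$ to $f^{n+1}$. Moving all the $n+1$ terms to the left, the scheme reads $A^n f^{n+1}=f^n$, where $A^n$ is tridiagonal with rows given by the coefficients of $f_{i-1}^{n+1},f_i^{n+1},f_{i+1}^{n+1}$; structurally these are exactly the bracketed coefficients in \eqref{eq:prop1_f}, now multiplying the unknowns at level $n+1$ and carrying a factor $-\Delta t/\Delta w$ (with $+1$ added on the diagonal). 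I would then establish that $A^n$ is a nonsingular $M$-matrix: for a $Z$-matrix with nonpositive off-diagonals and positive diagonal, strict diagonal dominance guarantees $(A^n)^{-1}\ge 0$ entrywise, whence $f^{n+1}=(A^n)^{-1}f^n\ge0$ whenever $f^n\ge0$.

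First I would check the sign of the off-diagonal entries. The coefficients multiplying $f_{i+1}^{n+1}$ and $f_{i-1}^{n+1}$ are precisely
\be
(1-\delta_{i+1/2})\tilde{\mathcal C}_{i+1/2}^n+\frac{D_{i+1/2}}{\Delta w}\ge0,\qquad -\delta_{i-1/2}\tilde{\mathcal C}_{i-1/2}^n+\frac{D_{i-1/2}}{\Delta w}\ge0,
\ee
whose nonnegativity was already obtained in the proof of Proposition 1 from the properties of the exponential through the definition \eqref{eq:delta} of $\delta_{i\pm1/2}$. Since they enter $A^n$ with a minus sign, the off-diagonal entries of $A^n$ are nonpositive; crucially this holds for \emph{any} $\Delta w$ and $\Delta t$, as it depends only on the structure of $\delta_{i\pm1/2}$ and not on the time step.

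Next I would control the diagonal. Writing $A^n_{ii}=1-(\Delta t/\Delta w)\nu_i$ with $\nu_i=\tilde{\mathcal C}_{i+1/2}^n\delta_{i+1/2}^n-\tilde{\mathcal C}_{i-1/2}^n(1-\delta_{i-1/2}^n)-(D_{i+1/2}+D_{i-1/2})/\Delta w$, the bounds $0\le\delta_{i\pm1/2}\le1$, $|\tilde{\mathcal C}_{i\pm1/2}^n|\le M$ and $D\ge0$ give $\nu_i\le 2M$, so that the restriction $\Delta t<\Delta w/(2M)$ forces $A^n_{ii}>0$. For strict diagonal dominance I would then compute the row sum: adding the diagonal and the two off-diagonal coefficients, the diffusion terms and the $\delta$-weights cancel telescopically and one is left with
\be
A^n_{ii}+A^n_{i,i+1}+A^n_{i,i-1}=1-\frac{\Delta t}{\Delta w}\left(\tilde{\mathcal C}_{i+1/2}^n-\tilde{\mathcal C}_{i-1/2}^n\right)\ge 1-\frac{2M\Delta t}{\Delta w}>0 .
\ee
Together with the sign of the off-diagonals this is strict diagonal dominance, so $A^n$ is a nonsingular $M$-matrix. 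Equivalently, one may phrase the conclusion as a discrete minimum principle: assuming $f^{n+1}$ attains a negative minimum at an index $k$ and using the nonpositive off-diagonals, the minimality $f_{k\pm1}^{n+1}\ge f_k^{n+1}$, and the positive row sum, one deduces $0\le f_k^n\le(\text{row sum})\,f_k^{n+1}<0$, a contradiction.

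The step I expect to require the most care is the row-sum computation at the boundary rows $i=0$ and $i=N$: there the no-flux conditions $\hat{\mathcal F}_{-1/2}^{n+1}=\hat{\mathcal F}_{N+1/2}^{n+1}=0$ remove one neighbor, so the stencil becomes two-point and the telescoping must be redone, yielding the row sums $1-(\Delta t/\Delta w)\tilde{\mathcal C}_{1/2}^n$ and $1+(\Delta t/\Delta w)\tilde{\mathcal C}_{N-1/2}^n$ respectively, both of which remain strictly positive under $\Delta t<\Delta w/(2M)$. Verifying that the diagonal stays positive and that strict diagonal dominance survives on these modified rows is the only genuinely delicate point; the interior estimates above are otherwise routine algebra, and the $M$-matrix (monotonicity) conclusion then delivers $f^{n+1}\ge0$ at once.
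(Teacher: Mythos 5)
Your proposal is correct and follows essentially the same route as the paper: both recast the semi-implicit update as a tridiagonal system $\mathcal A[\textbf{f}^n]\textbf{f}^{n+1}=\textbf{f}^n$, verify that the off-diagonal entries are nonpositive (the paper makes this manifest by rewriting them as $-Q_i^n,-P_i^n$ with $\alpha_{i+1/2}^n=\lambda_{i+1/2}^n/(\exp(\lambda_{i+1/2}^n)-1)>0$, while you reuse the sign argument from the explicit case), and obtain strict diagonal dominance from the same telescoping row sum $1-\tfrac{\Delta t}{\Delta w}(\tilde{\mathcal C}_{i+1/2}^n-\tilde{\mathcal C}_{i-1/2}^n)>0$ under $\Delta t<\Delta w/(2M)$, concluding via inverse-positivity of the resulting $M$-matrix. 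Your added remarks on the boundary rows and the minimum-principle rephrasing are sound refinements of the same argument, not a different approach.
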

\begin{proof}
Setting $\alpha_{i+1/2}^{n}=\dfrac{\lambda_{i+1/2}^{n}}{\exp(\lambda_{i+1/2}^{n})-1}$ and 
\be\begin{split}\label{eq:RQP}
& R_i^{n} = 1+\dfrac{\Delta t}{\Delta w^2}\left[ D_{i+1/2}\alpha_{i+1/2}^{n}+D_{i-1/2}\alpha_{i-1/2}^{n}\exp(\lambda_{i-1/2}^{n})  \right] \\
& Q_i^{n} = \dfrac{\Delta t}{\Delta w^2} D_{i+1/2}\alpha_{i+1/2}^{n}\exp(\lambda_{i+1/2}^{n})\\
& P_i^{n} = \dfrac{\Delta t}{\Delta w^2}D_{i-1/2}\alpha_{i-1/2}^{n},
\end{split}\ee
equation \eqref{eq:semi} corresponds to 
\be\label{eq:implicit_2}
R_i^{n} f_i^{n+1} - Q_i^{n}f_{i+1}^{n+1}-P_i^{n}f_{i-1}^{n+1}=f_i^n.
\ee
If we introduce the matrix 
\be
(\mathcal A[f^{n}])_{ij}=
\begin{cases}
R_i^{n}, & j=i\\
-Q_i^{n}, & j = i+1, 1\le i\le N \\
-P_i^{n}, & j = i-1, 0\le i \le N-1,
\end{cases}
\ee
with $R_i^{n}>0$, $Q_i^{n}>0$, $P_i^{n}>0$ defined in \eqref{eq:RQP} the semi-implicit scheme may be expressed in matrix form as follows
\be\label{eq:iter_S}
\mathcal A[\textbf{f}^{n}] \textbf{f}^{n+1}=\textbf{f}^n,
\ee
with $\textbf{f}^{n}=\left(f_0^n,\dots,f_N^n\right)$. Now the matrix $\mathcal A$ is strictly diagonally dominant if and only if 
\be
|R_i^n|>|Q_i^n|+|P_i^n|,\qquad i=0,1\dots,N,
\ee
condition which holds true if
\be\begin{split}\label{eq:strong_diagonal}
1&> \dfrac{\Delta t}{\Delta w^2}\left[ D_{i+1/2}\alpha_{i+1/2}^{n}\left(\exp(\lambda_{i+1/2}^{n})-1\right)-D_{i-1/2}\alpha_{i-1/2}^{n}\left( \exp(\lambda_{i-1/2}^{n})-1 \right) \right]\\
&=\dfrac{\Delta t}{\Delta w^2}\left[ D_{i+1/2}\lambda_{i+1/2}^{n}-D_{i-1/2}\lambda_{i-1/2}^{n} \right]
= \dfrac{\Delta t}{\Delta w}\left[\tilde{\mathcal B}_{i+1/2}^{n}-\tilde{\mathcal B}_{i-1/2}^{n} \right].
\end{split}\ee
\end{proof}




\subsection{Entropy property}
In order to discuss the entropy property we consider the prototype equation \cite{FPTT,PZ}
\be\label{eq:wu}
\partial_t f(w,t) = \partial_w \left[ (w-u)f(w,t) + \partial_w (D(w)f(w,t)) \right], \qquad w\in  I=[-1,1],
\ee
with $-1<u<1$ a given constant and boundary conditions
\be\label{eq:wu_boundary}
\partial_w (D(w)f(w,t))+(w-u)f(w,t) = 0, \qquad w=\pm1.
\ee
If the stationary state $f^\infty$ exists equation \eqref{eq:wu} may be written in the form
\be\label{eq:nonlog_landau}
\partial_t f(w,t) = \partial_w \left[ D(w)f^{\infty}(w)\partial_w \left(\dfrac{f(w,t)}{f^{\infty}(w)}\right) \right].
\ee
We define the relative entropy for all positive functions $f(w,t),g(w,t)$ as follows
\be\label{eq:rel_entropy}
\mathcal H(f,g) = \int_I f(w,t) \log\left(\dfrac{f(w,t)}{g(w,t)} \right),
\ee
we have \cite{FPTT}
\be
\dfrac{d}{dt}\mathcal H(f,f^{\infty}) = -\mathcal I_D(f,f^{\infty}),
\ee
where the dissipation functional $\mathcal I_D(\cdot,\cdot)$ is defined as 
\be\begin{split}
\mathcal I_D(f,f^{\infty})& = 
 \int_{\mathcal I} D(w)f(w,t)\left(\partial_w \log \left( \dfrac{f(w,t)}{f^{\infty}(w)}\right)\right)^2dw,\\
& = 
 \int_{\mathcal I} D(w)f^{\infty}(w,t)\partial_w \log \left( \dfrac{f(w,t)}{f^{\infty}(w)}\right)\partial_w\left(\frac{f}{f^{\infty}}\right)dw.\\ 
\end{split}\ee

\begin{lemma}\label{lem:flux_CCE}
 In the case $\mathcal B[f](w,t)=\mathcal B(w)$ the numerical flux function (\ref{eq:CC_flux})-(\ref{eq:f_CC}) with $\tilde{\mathcal B}_{i+1/2}$ and $\delta_{i+1/2}$ given by (\ref{eq:B_tilde})-(\ref{eq:delta}) can be written in the form (\ref{eq:nonlog_landau}) and reads
 \be
 \mathcal F_{i+1/2} = \dfrac{D_{i+1/2}}{\Delta w} \hat f_{i+1/2}^{\infty} \left( \dfrac{f_{i+1}}{f^{\infty}_{i+1}}-\dfrac{f_i}{f^{\infty}_i} \right),
 \label{eq:nnll}
 \ee
 with 
 \be
 \hat{f}^{\infty}_{i+1/2} = \dfrac{f_{i+1}^{\infty}f_i^{\infty}}{f_{i+1}^{\infty}-f_i^{\infty}}\log \left(\dfrac{f_{i+1}^{\infty}}{f_i^{\infty}}\right).
 \ee
 \end{lemma}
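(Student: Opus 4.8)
The plan is to substitute the explicit expression \eqref{eq:delta} for $\delta_{i+1/2}$ into the flux \eqref{eq:CC_flux}--\eqref{eq:f_CC}, collect the coefficients multiplying $f_{i+1}$ and $f_i$, and then recognize the logarithmic structure of \eqref{eq:nnll} through the steady-state relation. The key preliminary observation is that under the hypothesis $\mathcal B[f](w,t)=\mathcal B(w)$ the integrand in \eqref{eq:lambda_high} no longer depends on $f$, so $\lambda_{i+1/2}$ is a fixed mesh quantity and the discrete equilibrium $f^\infty$ of the scheme satisfies the quasi-stationary identity \eqref{eq:quasi_SS} \emph{exactly}. This gives the crucial relation
\[
\lambda_{i+1/2}=\log\!\left(\frac{f_i^\infty}{f_{i+1}^\infty}\right),\qquad \exp(\lambda_{i+1/2})=\frac{f_i^\infty}{f_{i+1}^\infty},
\]
which turns every transcendental term in $\delta_{i+1/2}$ into an algebraic expression in $f_i^\infty$ and $f_{i+1}^\infty$.

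Using $\tilde{\mathcal C}_{i+1/2}=\tfrac{D_{i+1/2}}{\Delta w}\lambda_{i+1/2}$ from \eqref{eq:B_tilde}, inserting \eqref{eq:f_CC} into \eqref{eq:CC_flux} and gathering terms, the coefficient of $f_{i+1}$ is $\tfrac{D_{i+1/2}}{\Delta w}\bigl[\lambda_{i+1/2}(1-\delta_{i+1/2})+1\bigr]$ and that of $f_i$ is $\tfrac{D_{i+1/2}}{\Delta w}\bigl[\lambda_{i+1/2}\delta_{i+1/2}-1\bigr]$. From \eqref{eq:delta} one gets the two compact identities
\[
\lambda_{i+1/2}\delta_{i+1/2}-1=\frac{\lambda_{i+1/2}}{1-\exp(\lambda_{i+1/2})},\qquad \lambda_{i+1/2}(1-\delta_{i+1/2})+1=\frac{\lambda_{i+1/2}\exp(\lambda_{i+1/2})}{\exp(\lambda_{i+1/2})-1}.
\]

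I would then substitute the steady-state relation to eliminate $\lambda_{i+1/2}$ and $\exp(\lambda_{i+1/2})$. Writing $1-\exp(\lambda_{i+1/2})=(f_{i+1}^\infty-f_i^\infty)/f_{i+1}^\infty$ and $\log(f_i^\infty/f_{i+1}^\infty)=-\log(f_{i+1}^\infty/f_i^\infty)$, a direct simplification shows that the coefficient of $f_i$ equals $-\tfrac{D_{i+1/2}}{\Delta w}\,\hat f^\infty_{i+1/2}/f_i^\infty$ and the coefficient of $f_{i+1}$ equals $\tfrac{D_{i+1/2}}{\Delta w}\,\hat f^\infty_{i+1/2}/f_{i+1}^\infty$, with the common factor $\hat f^\infty_{i+1/2}=\tfrac{f_{i+1}^\infty f_i^\infty}{f_{i+1}^\infty-f_i^\infty}\log(f_{i+1}^\infty/f_i^\infty)$ emerging as the logarithmic mean of the two equilibrium values. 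Factoring out $\hat f^\infty_{i+1/2}$ then yields precisely \eqref{eq:nnll}, which is the consistent discrete counterpart of the continuous flux in \eqref{eq:nonlog_landau}, with $\hat f^\infty_{i+1/2}$ playing the role of $f^\infty$ at the interface.

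I expect the only delicate point to be the bookkeeping of signs: the steady-state relation produces $\lambda_{i+1/2}=\log(f_i^\infty/f_{i+1}^\infty)$ with the indices reversed relative to the logarithm appearing in $\hat f^\infty_{i+1/2}$, so one must track that the factors $1-\exp(\lambda_{i+1/2})$ and $\exp(\lambda_{i+1/2})-1$ are rewritten with the correct orientation of $f_{i+1}^\infty-f_i^\infty$. Everything else is routine algebra, and no assumption beyond strict positivity of $D$ and of $f^\infty$ (needed to divide and to take the logarithm) is required; the limiting case $f_{i+1}^\infty=f_i^\infty$ is handled by noting that $\hat f^\infty_{i+1/2}\to f_i^\infty$, recovering the central scheme.
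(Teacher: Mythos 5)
Your proposal is correct and follows essentially the same route as the paper: both hinge on the observation that for $\mathcal B[f]=\mathcal B(w)$ the quantity $\lambda_{i+1/2}$ is time-independent and the steady state satisfies $\lambda_{i+1/2}=\log(f_i^\infty/f_{i+1}^\infty)$, after which one substitutes into $\delta_{i+1/2}$ and collects the coefficients of $f_{i+1}$ and $f_i$ to reveal the logarithmic-mean factor $\hat f^\infty_{i+1/2}$. The only difference is organizational (you simplify the coefficients in terms of $\lambda$ before substituting the equilibrium relation, the paper substitutes first), and your remark on the degenerate case $f_{i+1}^\infty=f_i^\infty$ is a harmless addition.
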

 \begin{proof}
 In the hypothesis $\mathcal B[f](w,t)=\mathcal  B(w)$ the definition of $\lambda_{i+1/2}$ does not depends on time, i.e. $\lambda_{i+1/2}=\lambda_{i+1/2}^{\infty}$ and if a steady state exists we may write
 \be
 \log f_{i}^{\infty}-\log f_{i+1}^{\infty} = \lambda_{i+1/2}.
 \ee
Furthermore, the flux function $\mathcal F_{i+1/2}$ assumes the following form
 \be\begin{split}
 \mathcal F_{i+1/2} &= \dfrac{D_{i+1/2}}{\Delta w}\left[ \lambda_{i+1/2}\tilde{f}_{i+1/2} +(f_{i+1}-f_i)\right]\\
 &=  \dfrac{D_{i+1/2}}{\Delta w}\left[ \lambda_{i+1/2} (f_{i+1}+\delta_{i+1/2}(f_i-f_{i+1})) +(f_{i+1}-f_i)\right],
 \end{split}\ee
 where 
 \be
 \delta_{i+1/2} = \dfrac{1}{\log f_{i}^{\infty}-\log f_{i+1}^{\infty}}+\dfrac{f_{i+1}^{\infty}}{f_{i+1}^{\infty}-f_i^{\infty}}.
 \ee
 Hence we have
  \be\begin{split}
 \mathcal F_{i+1/2}^n = \dfrac{D_{i+1/2}}{\Delta w} \log \left( \dfrac{f^{\infty}_i}{f_{i+1}^{\infty}}\right)  &\left[f_{i+1}+ \left(\dfrac{f_i-f_{i+1}}{\log f_{i}^{\infty}-\log f_{i+1}^{\infty}}+\dfrac{f_{i+1}^{\infty}(f_i-f_{i+1})}{f_{i+1}^{\infty}-f_i^{\infty}}\right)\right.\\
 &\left.\quad+\dfrac{f_{i+1}-f_{i}}{\log f_i^{\infty}-\log f_{i+1}^{\infty}}  \right], \\
 = \dfrac{D_{i+1/2}}{\Delta w} \log\left( \dfrac{f^{\infty}_i}{f_{i+1}^{\infty}}\right)& \left( \dfrac{f_{i+1}^{\infty}f_i-f_{i}^{\infty}f_{i+1}}{f_{i+1}^{\infty}-f_i^{\infty}} \right)
 \end{split}\ee
  which gives (\ref{eq:nnll}).
 \end{proof}

%
%

\begin{theorem}\label{th:1}
Let us consider $\mathcal B[f](w,t)=w-u$ as in equation \eqref{eq:wu}. The numerical flux (\ref{eq:CC_flux})-(\ref{eq:f_CC}) with $\tilde{\mathcal B}_{i+1/2}$ and $\delta_{i+1/2}$ given by (\ref{eq:B_tilde})-(\ref{eq:delta}) satisfies the discrete entropy dissipation
\be
\dfrac{d}{dt}\mathcal H_{\Delta}(f,f^{\infty})=-\mathcal I_{\Delta}(f,f^{\infty}),
\ee
where
\be
\mathcal H_{\Delta w}(f,f^{\infty}) = \Delta w \sum_{i=0}^N f_i \log \left(\dfrac{f_i}{f_i^{\infty}} \right)
\ee
and $I_{\Delta}$ is the positive discrete dissipation function 
\be
\mathcal I_{\Delta}(f,f^{\infty}) = \sum_{i=0}^N
 \left[ \log \left(\dfrac{f_{i+1}}{f^{\infty}_{i+1}}\right)-\log\left(\dfrac{f_i}{f_i^{\infty}}\right) \right]\cdot \left(\dfrac{f_{i+1}}{f_{i+1}^{\infty}}-\dfrac{f_i}{f_{i}^{\infty}}\right)\hat{f}_{i+1/2}^{\infty}D_{i+1/2}\ge 0.
\ee
\end{theorem}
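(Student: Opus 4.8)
The plan is to differentiate the discrete entropy $\mathcal{H}_{\Delta w}$ along the semi-discrete scheme \eqref{eq:flux} and reorganize the result into a sum of manifestly nonnegative contributions. Writing $g_i=\log(f_i/f_i^{\infty})$ and using that $f_i^{\infty}$ is stationary, differentiation of the product $f_i\log(f_i/f_i^{\infty})$ produces a term $\dot f_i\,g_i$ together with a term $\dot f_i$ coming from the derivative of the logarithm. Summing over $i$, the second family collapses to $\sum_i\dot f_i$, which vanishes by the continuous-time version of the mass conservation property (Lemma 1): indeed $\sum_i(\mathcal{F}_{i+1/2}-\mathcal{F}_{i-1/2})$ telescopes to $\mathcal{F}_{N+1/2}-\mathcal{F}_{-1/2}=0$ under the no-flux conditions. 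Cancelling the factor $\Delta w$ from the entropy against the $1/\Delta w$ from the scheme, this leaves
\[
\frac{d}{dt}\mathcal{H}_{\Delta w}(f,f^{\infty})=\sum_{i=0}^{N}(\mathcal{F}_{i+1/2}-\mathcal{F}_{i-1/2})\,g_i.
\]

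Next I would perform a discrete summation by parts: shifting the index in the $\mathcal{F}_{i-1/2}$ sum and discarding the endpoint fluxes $\mathcal{F}_{-1/2}=\mathcal{F}_{N+1/2}=0$ gives
\[
\frac{d}{dt}\mathcal{H}_{\Delta w}=-\sum_{i=0}^{N-1}\mathcal{F}_{i+1/2}\,(g_{i+1}-g_i).
\]
Since $\mathcal{B}[f](w,t)=w-u$ depends on $w$ only, Lemma \ref{lem:flux_CCE} applies and supplies the representation \eqref{eq:nnll} of the flux. Substituting it, and noting $g_{i+1}-g_i=\log(f_{i+1}/f_{i+1}^{\infty})-\log(f_i/f_i^{\infty})$, each edge contributes the factor $D_{i+1/2}\,\hat f^{\infty}_{i+1/2}\,(f_{i+1}/f_{i+1}^{\infty}-f_i/f_i^{\infty})(g_{i+1}-g_i)$, so that the right-hand side becomes exactly $-\mathcal{I}_{\Delta}(f,f^{\infty})$ up to the normalization used in the statement.

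It then remains to check $\mathcal{I}_{\Delta}\ge 0$ termwise. Setting $a=f_{i+1}/f_{i+1}^{\infty}$ and $b=f_i/f_i^{\infty}$, each summand equals $D_{i+1/2}\,\hat f^{\infty}_{i+1/2}\,(a-b)(\log a-\log b)$; because $\log$ is increasing, $(a-b)$ and $(\log a-\log b)$ share the same sign and their product is nonnegative. For the weight I would observe that
\[
\hat f^{\infty}_{i+1/2}=\frac{f_{i+1}^{\infty}f_i^{\infty}}{L(f_{i+1}^{\infty},f_i^{\infty})},\qquad L(p,q)=\frac{p-q}{\log p-\log q},
\]
is the ratio of the positive quantity $f_{i+1}^{\infty}f_i^{\infty}$ to the logarithmic mean $L$, which is strictly positive for positive arguments; together with $D_{i+1/2}>0$ in the interior, this renders every term nonnegative and hence $\mathcal{I}_{\Delta}\ge 0$.

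The step I expect to require the most care is the summation by parts together with the boundary bookkeeping: one must verify that the no-flux conditions remove exactly the endpoint terms, so that only the interior edges $i=0,\dots,N-1$ survive, matching the index convention written in the statement of $\mathcal{I}_{\Delta}$. Once this is settled, the remainder is the algebraic substitution of \eqref{eq:nnll} and the two elementary positivity facts, namely the monotonicity of $\log$ and the positivity of the logarithmic mean.
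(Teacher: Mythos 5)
Your proposal is correct and follows essentially the same route as the paper: differentiate the discrete relative entropy, discard the constant term via the telescoping/no-flux mass conservation, sum by parts, substitute the flux representation of Lemma \ref{lem:flux_CCE}, and conclude from the sign agreement of $a-b$ and $\log a-\log b$ together with the positivity of $\hat f^{\infty}_{i+1/2}$. Your explicit identification of $\hat f^{\infty}_{i+1/2}$ through the logarithmic mean is a small but welcome addition that the paper leaves implicit.
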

\begin{proof}
From the definition of relative entropy we have
\be\begin{split}
\dfrac{d}{dt} \mathcal H(f,f^{\infty})&= \Delta w\sum_{i=0}^N  \dfrac{df_i}{dt}\left(\log\left(\dfrac{f_i}{f_i^{\infty}}\right)+1\right)\\
&= \Delta w \sum_{i=0}^N \left( \log\left(\dfrac{f_i}{f_i^{\infty}}\right)+1 \right)(\mathcal F_{i+1/2}-\mathcal F_{i-1/2}),
\end{split}\ee
and after summation by parts we get
\be
\dfrac{d}{dt} \mathcal H(f,f^{\infty})=-\Delta w \sum_{i=0}^N \left[ \log \left(\dfrac{f_{i+1}}{f_{i+1}^{\infty}}\right)-\log\left(\dfrac{f_i}{f_i^{\infty}}\right) \right]\mathcal F_{i+1/2}.
\ee
Thanks to the identity of Lemma \ref{lem:flux_CCE}  we may conclude since the function $(x-y)\log(x/y)$ is non-negative for all $x,y\ge 0$.
\end{proof}

\section{Numerics}
In this section we present several numerical tests for the proposed structure--preserving schemes. In particular, we show that the schemes accurately describe the steady state solution of mean-field equations. 

\subsubsection*{Test 1: accuracy and steady states} Let us consider the evolution of a distribution described by the equation (\ref{eq:wu}) with 
\be
u= \int_I v f(v,t)dv, \qquad D(w) = \dfrac{\sigma^2}{2}(1-w^2)^2.
\ee
We consider as initial distribution  
\be\label{eq:opinion_initial}
f(w,0) = \beta \left[ \exp\{-c(w+1/2)\}+\exp\{-c(w-1/2)\} \right],\qquad c = 30,
\ee
and $\beta>0$ a normalization constant. The stationary solution in this case can be explicitly computed and is given by (\ref{eq:ops}).

We compute the relative $L^1$ error of the solution with respect to the stationary state using $N=41$ points. In Figure \ref{fig:opinion_1} we show the evolution of the mean--field equation and the relative $L^1$-error in approximating the steady state solution. We used open Newton-Cotes formulas of various orders and Gaussian quadrature to evaluate (\ref{eq:lambda_high}). It is possible to observe how the different integration methods capture the steady state with different accuracy. In particular using Gaussian quadrature we essentially reached machine precision.  

\begin{figure}[t]\label{fig:opinion_1}
\centering
\subfigure[]{
\includegraphics[scale=0.29]{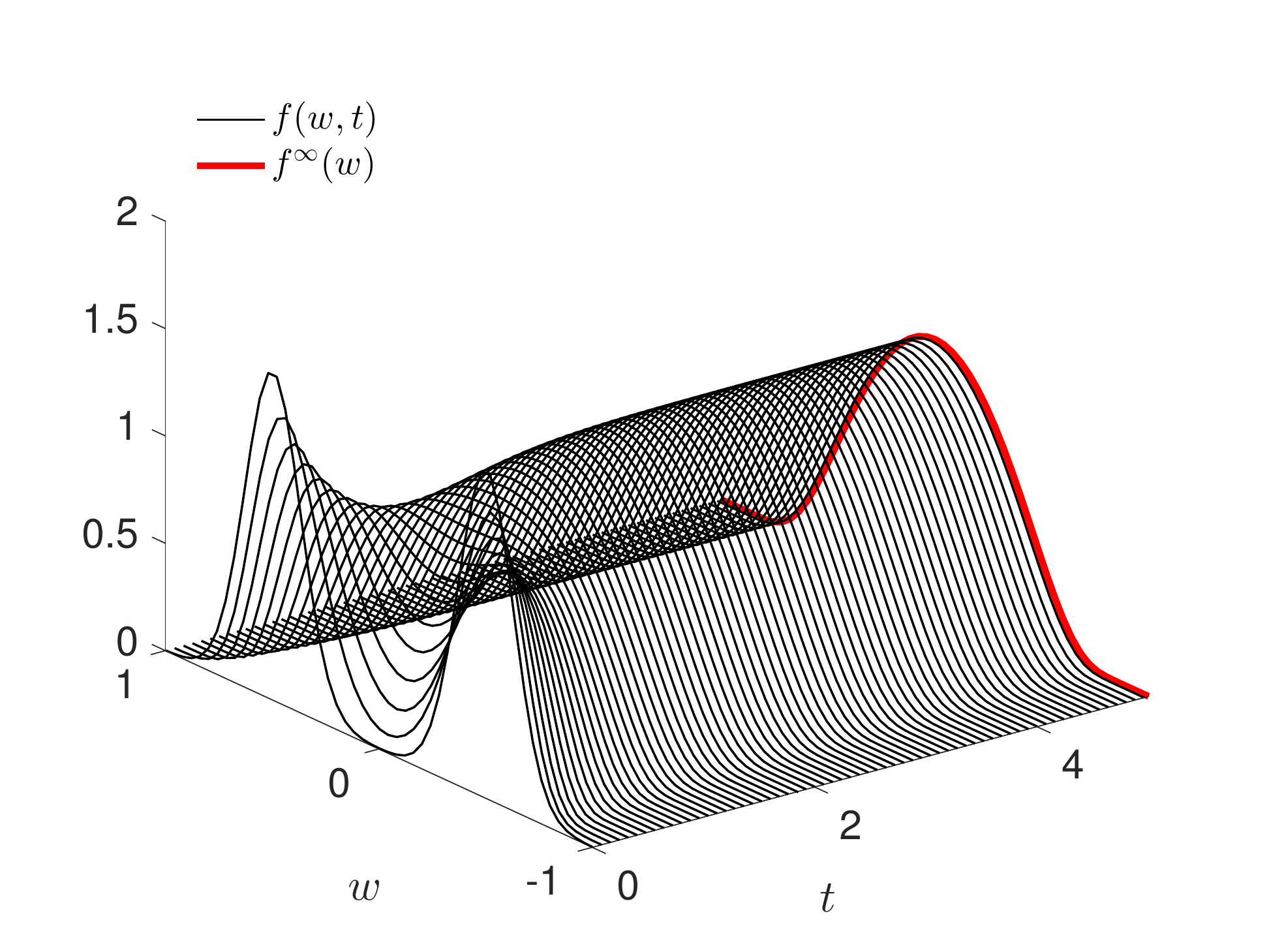}}
\subfigure[]{\includegraphics[scale=0.29]{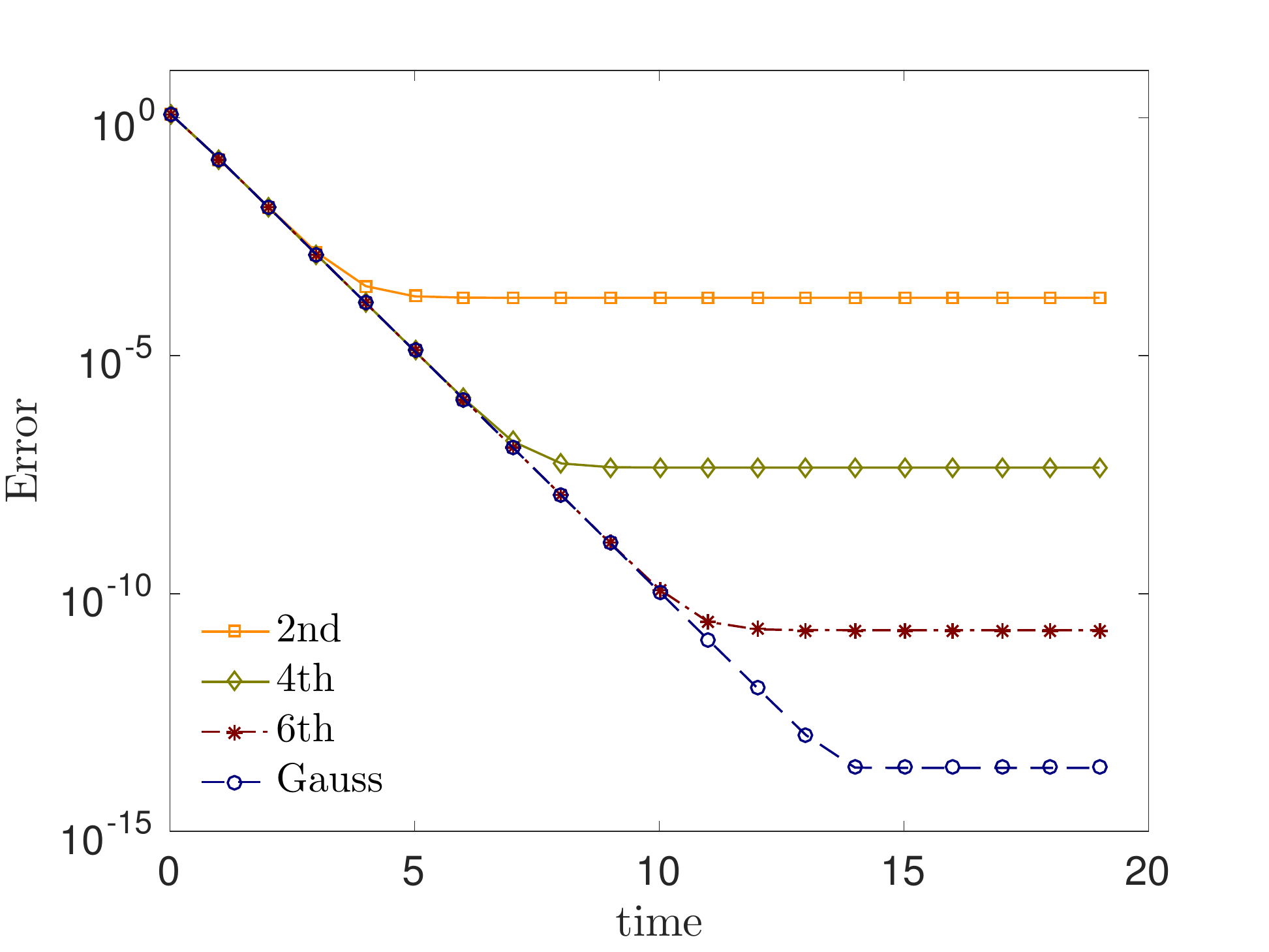}}
\caption{Test 1. \textbf{(a)} Time evolution of the density $f(w,t)$ for problem (\ref{eq:wu}) with initial datum \eqref{eq:opinion_initial} over the time interval $[0,5]$ for $\sigma^2/2=0.1$, $\Delta w = 0.05$. \textbf{(b)} Evolution of the relative $L^1$ error with respect to the stationary solution (\ref{eq:ops}) for various quadrature methods. }
\end{figure}

In Table \ref{tab:2} we estimate the overall order of convergence of the scheme for various integration methods. Here we used $N=41,81,161$ grid points. The time integration has been performed with an explicit RK4 method and the time step is chosen in such a way that the CFL condition for the positivity of the scheme is satisfied, therefore $\Delta t=O((\Delta w)^2)$. As expected the methods are second order accurate in transient regimes and, as they approach the steady state, they reach the order of the quadrature method. Clearly, the order of Gaussian quadrature is bounded by the maximum observable order which is $8$ due to the choice of the time discretization method.

\begin{table}
\begin{center}
\begin{tabular}{ |c || c | c | c | c |  }
\hline
                                      & $2$nd  &  $4$th & $6$th & Gauss \\ \hline
\multirow{2}{*} {$T=1$} & 1.8676 &  1.9972 & 1.9958 & 1.9958  \\
& 1.9840 & 1.9991 & 1.9987 & 1.9987 \\ 
\multirow{2}{*}{$T=5$} & 1.9348 & 3.2518 & 2.3578 & 2.3344 \\
& 2.0043 & 2.6218 & 2.0948 & 2.0930 \\ 
\multirow{2}{*}{$T=10$} & 1.9289 & 3.9178 & 6.4645 & 7.3482 \\
& 2.0034 & 3.9185 & 6.3630 & 7.9217 \\
\multirow{2}{*}{$T=15$} & 1.9289 & 3.9178 & 6.4701 & 7.3512 \\
& 2.0034 & 3.9786 & 6.6021 & 7.9954 \\ 
\hline
\end{tabular}\label{tab:2}
\end{center}
\caption{Test 1. Estimation of the order of convergence toward the reference stationary state for each integration method at different times.}
\end{table}

\subsubsection*{Test 2: flocking dynamics} 
We consider a mean-field Cucker-Smale flocking model as introduced in (\ref{eq:CS}). The space variable is discretized using a third order WENO scheme, and the transport and interaction process are combined using a second order Strang splitting scheme. 
For the mean-field term, we considered a semi--implicit scheme with Gaussian quadrature of the weights. This choice guarantees spectral accuracy for the description of the steady state solution of the equation. 

In Figure \ref{fig:CS} we report the evolution of the solution $f(x,w,t)$ in the phase space $(x,w)\in[-3,3]\times[-5,5]$ with $\Delta x = 6\cdot 10^{-2}$ and $\Delta w = 5\cdot 10^{-2}$. The time step has been chosen in order to satisfy the CFL condition $\Delta t/\Delta x=0.25/\max(w)$.

We considered $\gamma=0.1<1/2$ in the Cucker--Smale interaction function (\ref{eq:csH}) and a constant diffusion $D(x,w)=0.1$. The initial datum is here given by a multivariate population which shares the same average space location $x=0$ and is strongly clustered around opposite velocities $v=\pm 1.5$. As expected, the whole system converge to the same velocity, i.e. the distribution tends to concentrate in the velocity space and to be distributed uniformly along the spatial dimension. 

\begin{figure}[t]\label{fig:CS}
\centering
\subfigure[$t=0$]{\includegraphics[scale=0.213]{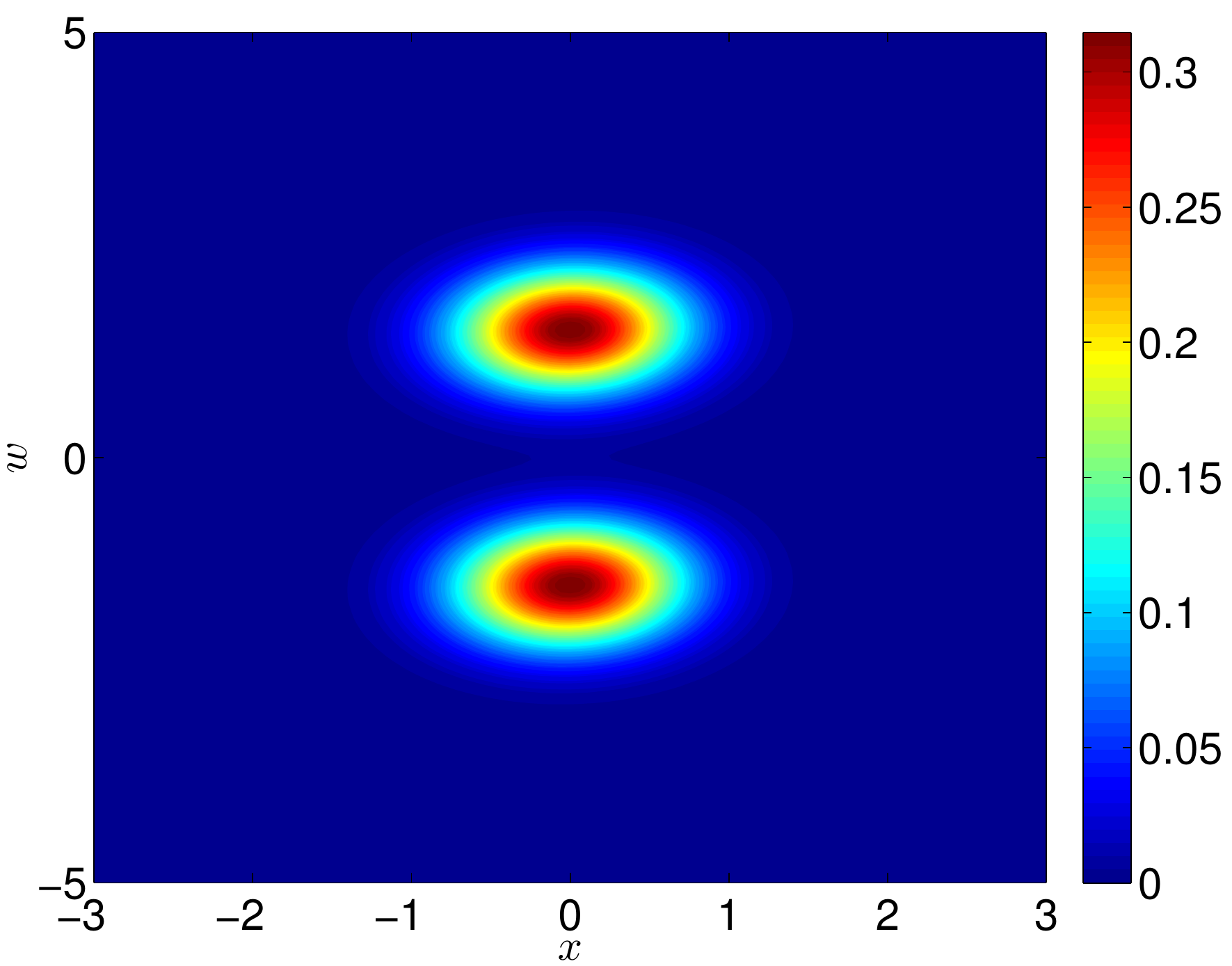}}
\subfigure[$t=0.6$]{\includegraphics[scale=0.213]{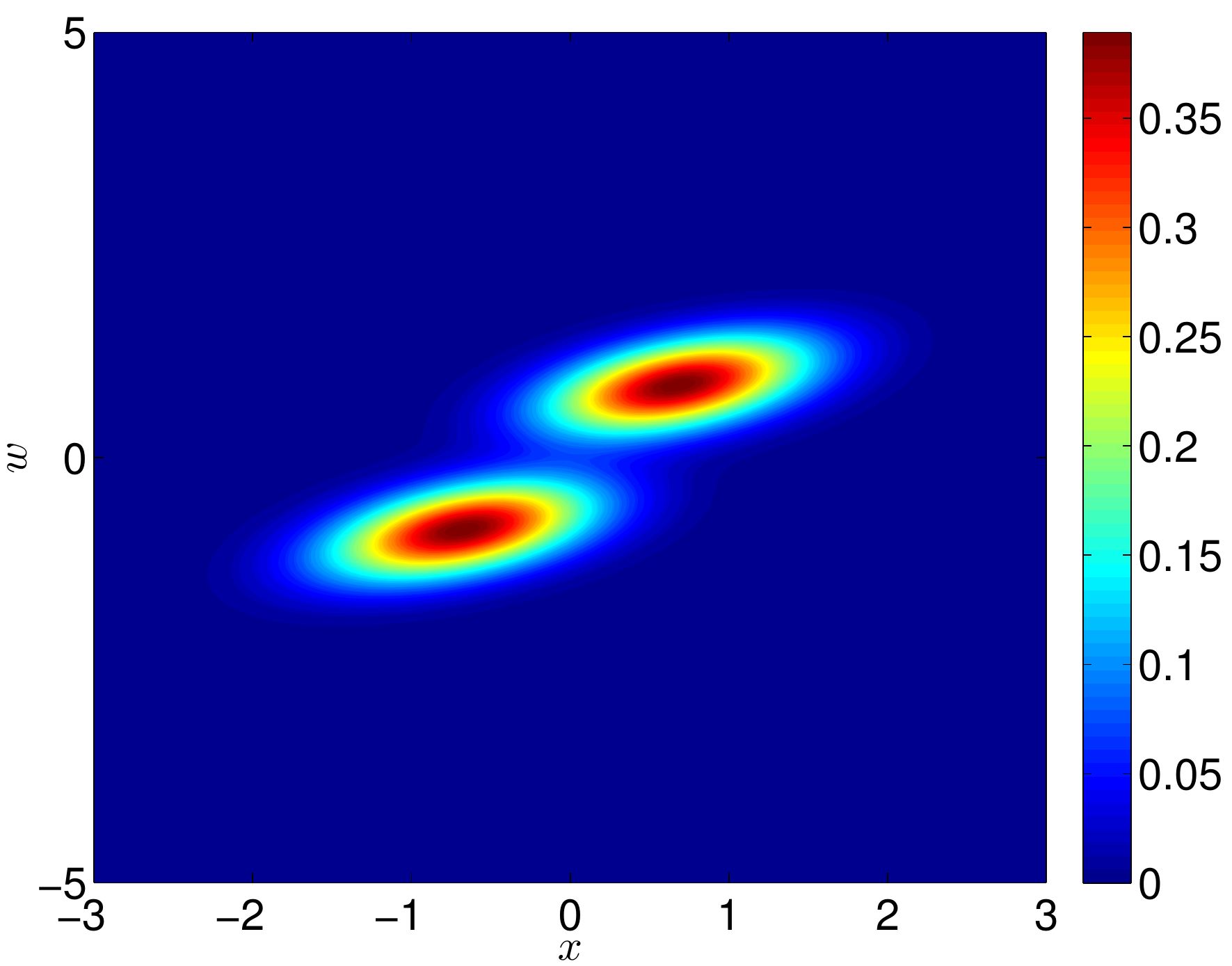}}
\subfigure[$t=1.2$]{\includegraphics[scale=0.213]{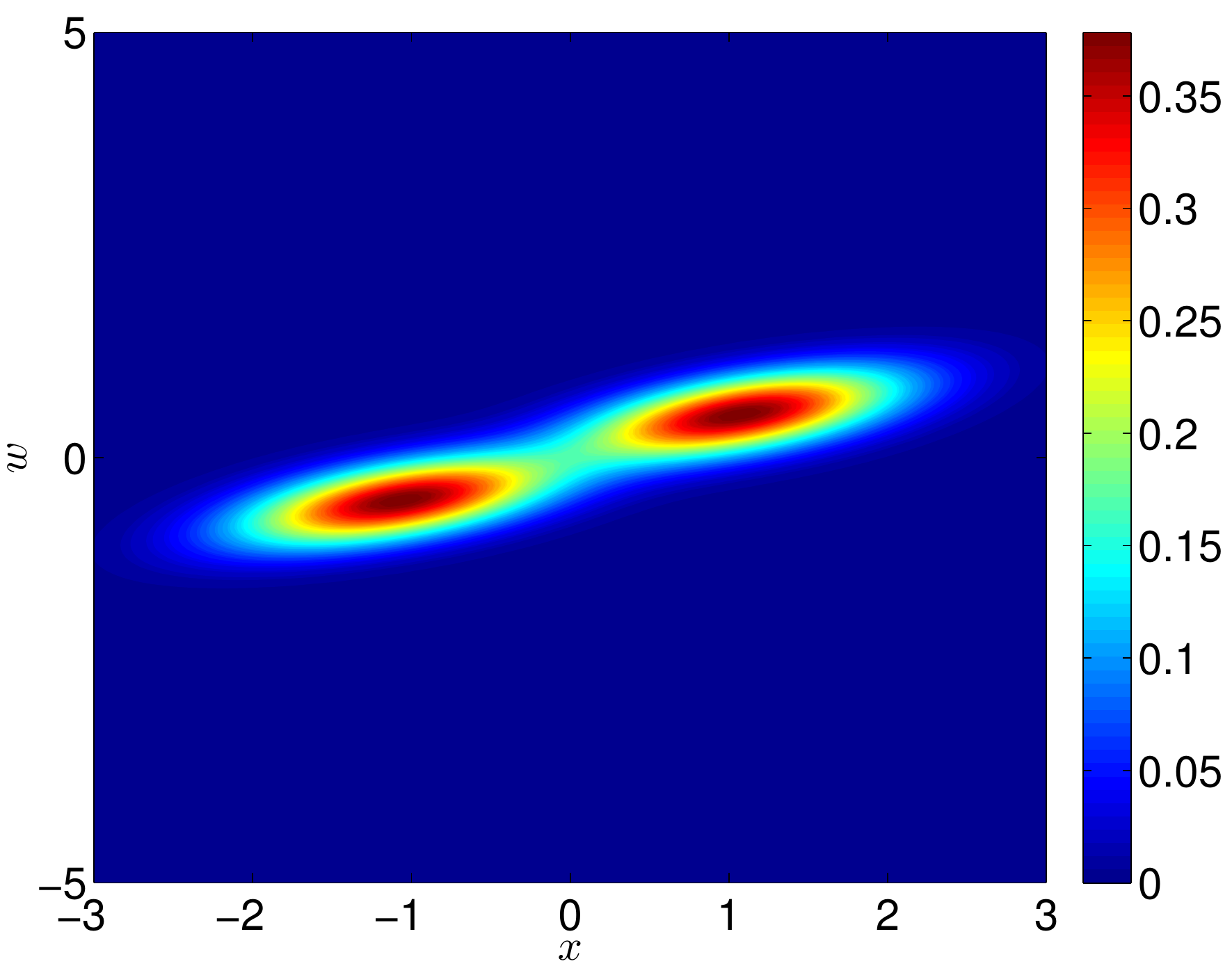}}\\
\subfigure[$t=3.0$]{\includegraphics[scale=0.213]{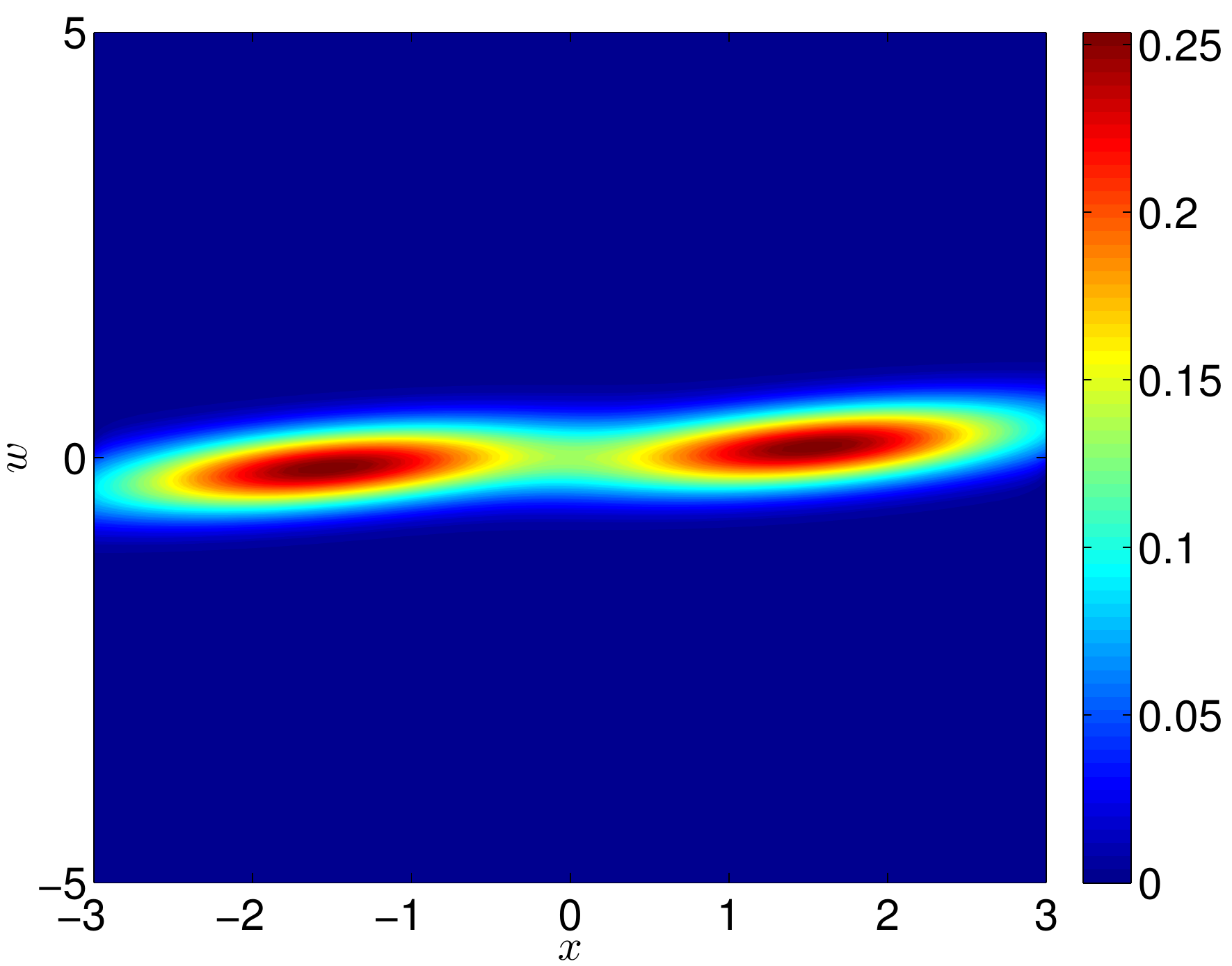}}
\subfigure[$t=6.0$]{\includegraphics[scale=0.213]{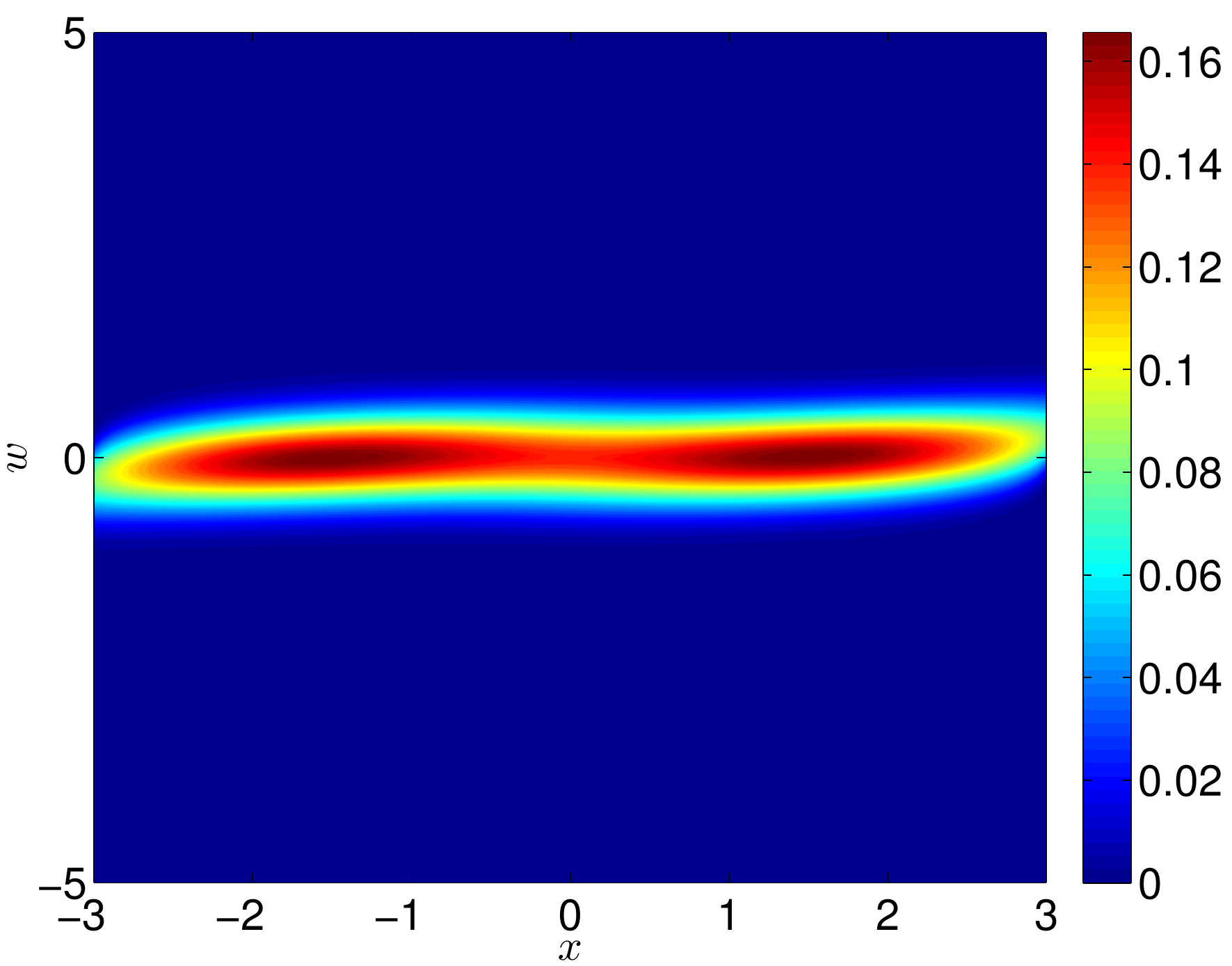}}
\subfigure[$t=9.0$]{\includegraphics[scale=0.213]{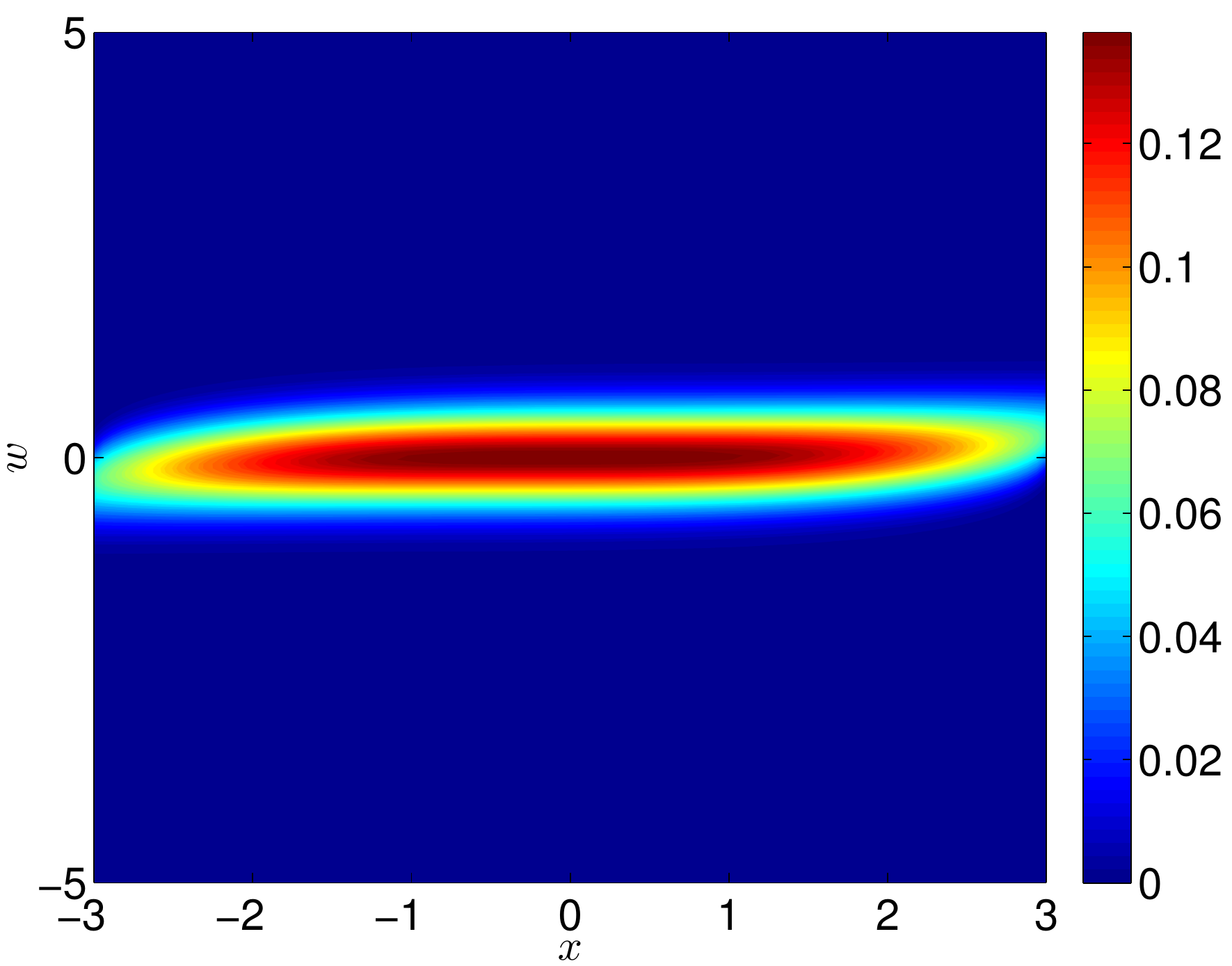}}
\caption{Test 2. Mean--field Cucker--Smale model for $(x,w)\in[-3,3]\times[-5,5]$ with $\Delta x = 6\cdot 10^{-2}$ and $\Delta w = 5\cdot 10^{-2}$, $\Delta t/\Delta x=0.25/\max(w)$. We considered $\gamma=0.1$ in (\ref{eq:csH}) and a constant diffusion function $D=0.1$.  }
\end{figure}

\subsubsection*{Test 3: opinion on networks}
Finally, we consider the model of opinion on networks (\ref{eq:APZ}). We focus on the case of a connection dependent bounded confidence model, where the agents interact only within a certain range of confidence. Hence, we define the compromise function \cite{APZ3}
\be\label{eq:HK}
P(w,v;x,y) = \chi_{\{|w-v|\le \Delta(x)\}}(v),
\ee
where $\Delta (x) = d_0 \dfrac{x}{c_{\textrm{max}}}$ and $D(w,x)=(1-w^2)^2$. This choice reflects a behavior where agents with higher number of connections are prone to larger level of confidence.  We report in Figure \ref{fig:HK} the evolution of the solution (where in order to better show its evolution we plotted $\log(f(w,x,t)+\epsilon)$, with $\epsilon = 0.001$). We can observe how the introduction of the function $\Delta(c)$ creates a heterogeneous emergence of clusters with respect to the connectivity level: for higher level of connectivity consensus is reached, since the bounded confidence level is larger, instead for lower levels of connectivity multiple clusters appears. In the limiting case $c=0$ the opinions are not influenced by the consensus dynamics.

\begin{figure}[t]\label{fig:HK}
\centering
\subfigure[$t=0$]{\includegraphics[scale= 0.26]{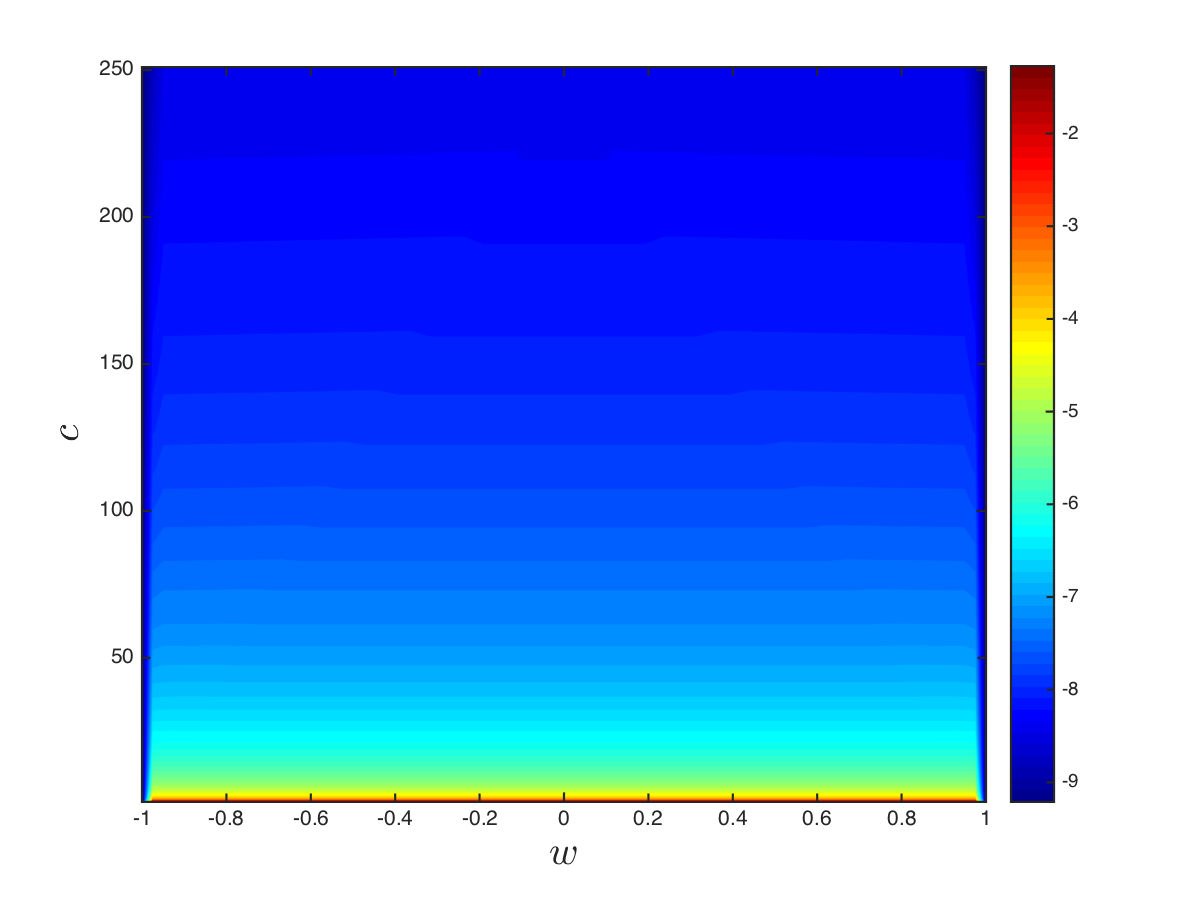}}
\subfigure[$t=10$]{\includegraphics[scale= 0.26]{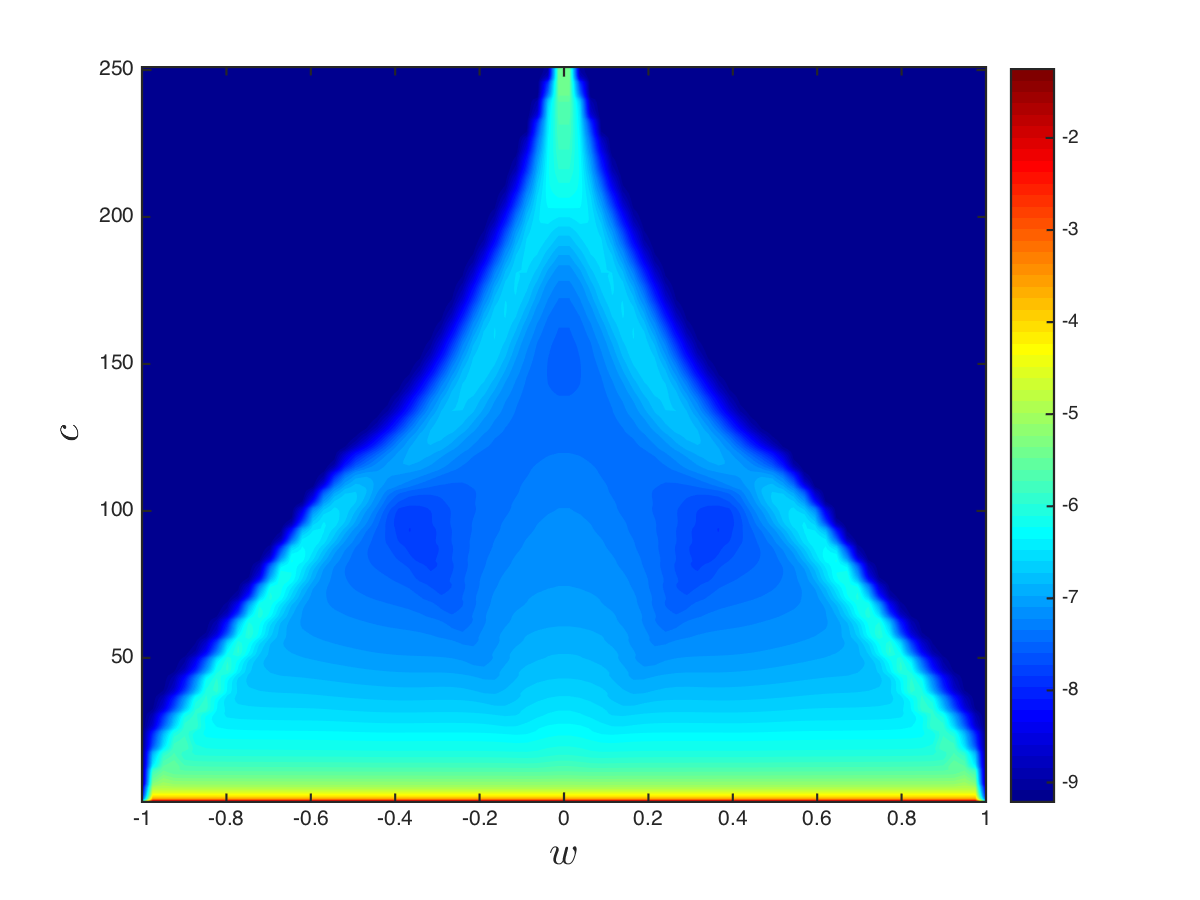}}
\subfigure[$t=50$]{\includegraphics[scale= 0.26]{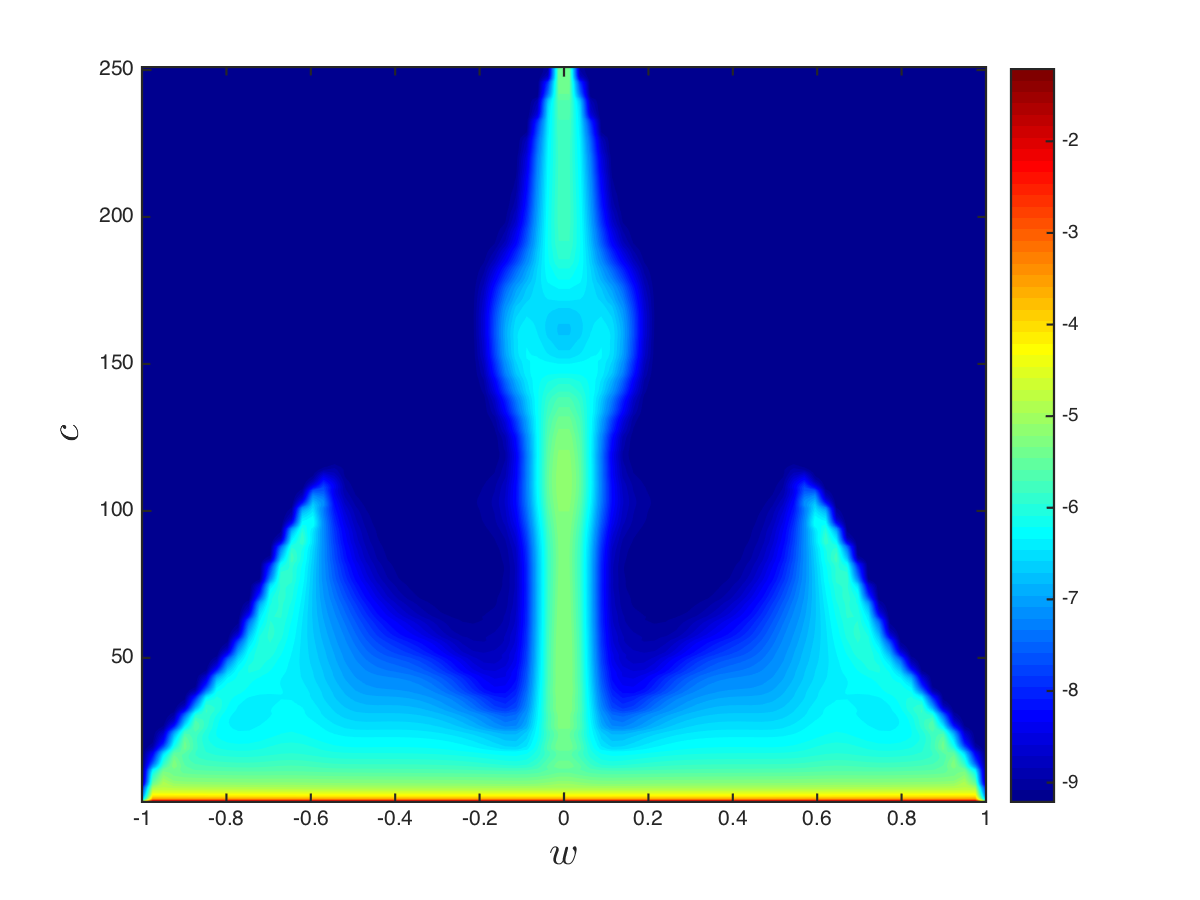}}
\subfigure[t=100]{\includegraphics[scale= 0.26]{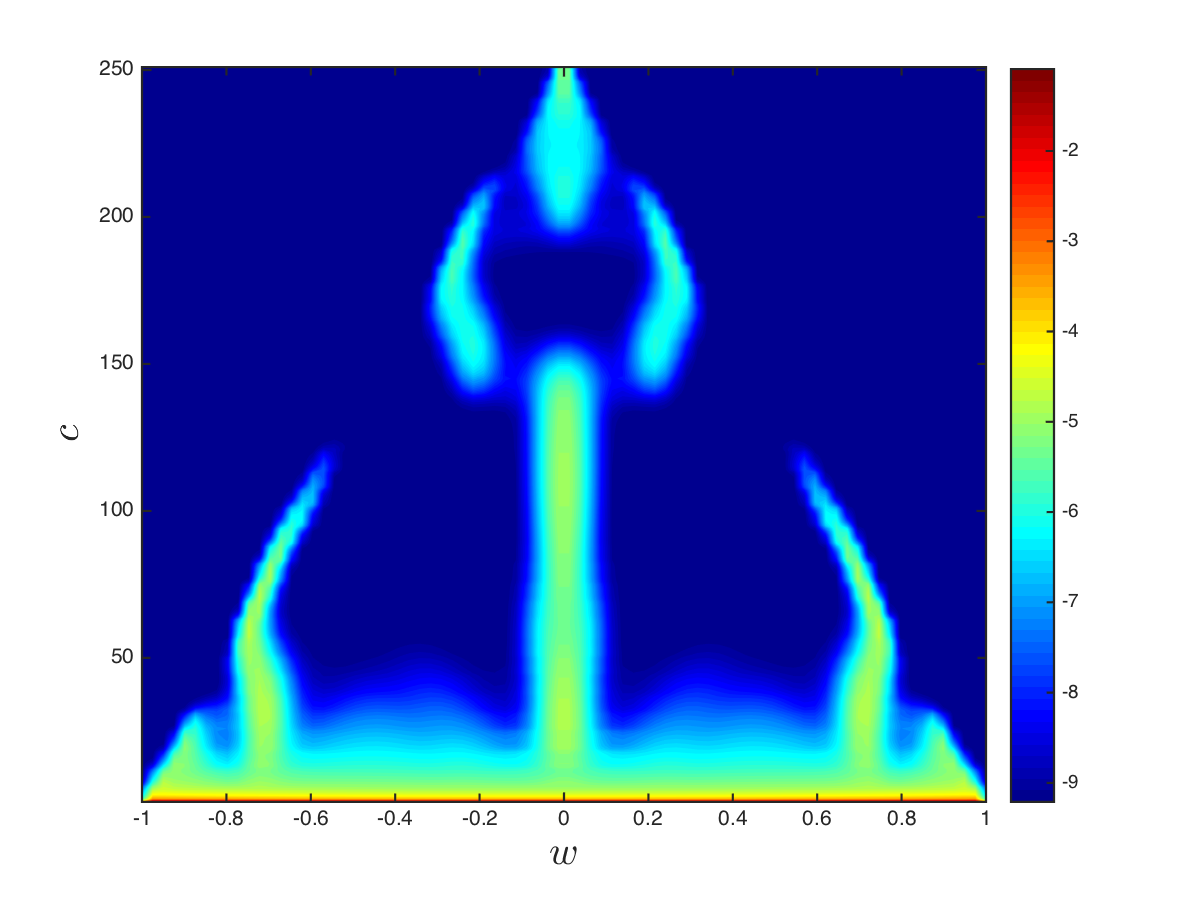}}
\caption{Test 3. Evolution of the solution of the mean--field model \eqref{eq:APZ} with uniform initial opinion and power law type connection distribution. The interaction are described by \eqref{eq:HK} with $d_0=1.01$, in the time interval $[0,100]$. The other parameters are $\sigma^2=10^{-3}$, $c_{\textrm{max}}=250$, $V_r=V_a=1$, $\gamma(0)=30$, $\alpha=10^{-1}$, $\beta=0$. 
}
\end{figure}

\newpage

\end{document}